\newtheorem{theorem}{Theorem}[section]
\newtheorem{thm}{Theorem}
\newtheorem{coro}[thm]{Corollary}
\newtheorem{lemma}[theorem]{Lemma}
\newtheorem{proposition}[theorem]{Proposition}
\newtheorem{corollary}[theorem]{Corollary}
\theoremstyle{definition}
\newtheorem{remark}[theorem]{Remark}
\numberwithin{equation}{section}
\title[Teichm\"uller space of 
circle diffeomorphisms]{The complex structure of the Teichm\"uller space of 
circle diffeomorphisms in the Zygmund smooth class II}
\author[K. Matsuzaki]{Katsuhiko Matsuzaki}
\address{Department of Mathematics, School of Education, Waseda University \endgraf
Shinjuku, Tokyo 169-8050, Japan}
\email{matsuzak@waseda.jp}
\subjclass[2020]{Primary 30C62, 30F60, 58D05; Secondary 32G15, 37E10, 37F34}
\keywords{universal Teichm\"uller space, circle diffeomorphism, Zygmund class, little Zygmund, holomorphic split submersion, 
pre-Schwarzian derivative,
fiber bundle, quotient Bers embedding}
\thanks{Research supported by 
Japan Society for the Promotion of Science (KAKENHI 23K25775 and 23K17656)}
\begin{document}

\maketitle

\begin{abstract}
In our previous paper with the same title,  
we established the complex Banach manifold structure for the Teich\-m\"ul\-ler space of  
circle diffeomorphisms whose derivatives belong to the Zygmund class.  
This was achieved by demonstrating that the Schwarzian derivative map is a holomorphic split submersion.  
We also obtained analogous results for the pre-Schwarzian derivative map.  

In this second part of the study,  
we investigate the structure of the image of the pre-Schwarzian derivative map,  
viewing it as a fiber space  
over the Bers embedding of the Teich\-m\"ul\-ler space,  
and prove that it forms a real-analytic disk-bundle.  
Furthermore, we consider the little Zygmund class and establish corresponding results  
for the closed Teich\-m\"ul\-ler subspace consisting of mappings in this class.  

Finally, we construct the quotient space of this subspace in analogy with the asymptotic  
Teich\-m\"ul\-ler space and prove that the quotient Bers embedding and pre-Bers embedding  
are well-defined and injective, thereby endowing it with a complex structure modeled on a quotient Banach space.  
\end{abstract}

\section{Introduction and a Summary of Part I}

This paper continues the study from Part I \cite{M3} on the class of circle diffeomorphisms whose derivatives satisfy the Zygmund continuity condition. 
The corresponding Teich\-m\"ul\-ler space $T^Z$ for these circle diffeomorphisms was defined by Tang and Wu \cite{TW1}. In \cite{M3}, we endowed $T^Z$ with the structure of a complex Banach manifold. In this introduction, we first summarize these results.

We denote by ${\rm Bel}(\mathbb D^*)$ the space of Beltrami coefficients $\mu$, which are measurable functions on the exterior of the unit disk $\mathbb D^*= \{z \mid |z|>1\} \cup \{\infty\}$ with the $L^\infty$ norm $\Vert \mu \Vert_\infty$ less than $1$. Two Beltrami coefficients $\mu, \nu \in {\rm Bel}(\mathbb D^*)$ are said to be \emph{Teich\-m\"ul\-ler equivalent} if the normalized quasiconformal self-homeomorphisms 
$f^\mu$ and $f^\nu$ of $\mathbb D^*$, having $\mu$ and $\nu$ as their complex dilatations, coincide on the unit circle 
$\mathbb S = \partial \mathbb D^*$. The normalization is given by fixing three distinct points on $\mathbb S$. The \emph{universal Teich\-m\"ul\-ler space} $T$ is the quotient space of ${\rm Bel}(\mathbb D^*)$ by the Teich\-m\"ul\-ler equivalence. We denote this projection by 
$\pi: {\rm Bel}(\mathbb D^*) \to T$ and write $\pi(\mu)=[\mu]$.

To endow $T$ with a complex structure, we introduce the complex Banach space $A(\mathbb D)$ of holomorphic functions $\varphi$ on the unit disk $\mathbb D = \{z \mid |z|<1\}$ satisfying
\[
\Vert \varphi \Vert_A=\sup_{|z|<1}\,(1-|z|^2)^2|\varphi(z)|<\infty.
\]  
For every $\mu \in {\rm Bel}(\mathbb D^*)$, let $f_\mu$ be the quasiconformal self-homeomorphism of the extended complex plane $\widehat {\mathbb C}$ with complex dilatation $0$ on $\mathbb D$ and $\mu$ on $\mathbb D^*$. 
Then, we define the \emph{Schwarzian derivative map} $\Phi$ by assigning $\mu$ to 
the Schwarzian derivative $S_{f_\mu|{\mathbb D}}$ of $f_\mu|_{\mathbb D}$. 
It is known that $S_{f_\mu|_{\mathbb D}} \in A(\mathbb D)$ and that $\Phi: {\rm Bel}(\mathbb D^*) \to A(\mathbb D)$ is a holomorphic split submersion onto its image (see \cite[Section 3.4]{Na}).

Moreover, $\Phi$ factors through $\pi$, 
yielding a well-defined injection $\alpha: T \to A(\mathbb D)$ satisfying $\alpha \circ \pi = \Phi$. 
This is called the \emph{Bers embedding} of $T$. By the properties of $\Phi$, 
we conclude that $\alpha$ is a homeomorphism onto its image $\alpha(T) = \Phi({\rm Bel}(\mathbb D^*))$. 
This provides $T$ with the structure of a complex Banach manifold as a domain 
(i.e., a connected open subset) in $A(\mathbb D)$, uniquely determined by requiring that the Teich\-m\"ul\-ler projection $\pi: {\rm Bel}(\mathbb D^*) \to T$ be a holomorphic submersion.

To analyze the Zygmund smooth class, we consider the subspace ${\rm Bel}^Z(\mathbb D^*) = {\rm Bel}(\mathbb D^*) \cap L^Z(\mathbb D^*)$, where
\[
L^Z(\mathbb D^*)=\{ \mu \in L^\infty(\mathbb D^*) \mid \Vert \mu \Vert_Z=
\underset{\ |z|>1}{\mathrm{ess}\sup}\,((|z|^2-1)^{-1}\lor 1)|\mu(z)|<\infty\}.
\]  
We note that ${\rm Bel}^Z(\mathbb D^*)$ is a domain in the Banach space $L^Z(\mathbb D^*)$ with norm $\Vert \mu \Vert_Z$. The Teich\-m\"ul\-ler space $T^Z$ is defined as the quotient space of ${\rm Bel}^Z(\mathbb D^*)$ by the Teich\-m\"ul\-ler equivalence, namely, $T^Z = \pi({\rm Bel}^Z(\mathbb D^*))$. In this setting, we consider the complex Banach space
\[
A^Z(\mathbb D)=\{\varphi \in A(\mathbb D) \mid \Vert \varphi \Vert_{A^Z}=\sup_{|z|<1}\,(1-|z|^2)|\varphi(z)|<\infty\}
\]  
as the corresponding function space. It is evident that $\Vert \varphi \Vert_A \leq \Vert \varphi \Vert_{A^Z}$. We restrict $\Phi$ and $\alpha$ to ${\rm Bel}^Z(\mathbb D^*)$ and $T^Z$, respectively, and denote them by the same notation.

The following theorem, proved in \cite{TW1} and \cite{M3}, establishes the fundamental properties of $T^Z$:

\setcounter{thm}{-1}

\begin{thm}\label{previous}
The following statements hold:
\begin{enumerate}
\item The Schwarzian derivative map $\Phi:{\rm Bel}^Z(\mathbb D^*) \to A^{Z}(\mathbb D)$ is a holomorphic split submersion onto its image $\Phi({\rm Bel}^Z(\mathbb D^*))$.
\item $\Phi({\rm Bel}^Z(\mathbb D^*)) = A^Z(\mathbb D) \cap \Phi({\rm Bel}(\mathbb D^*))$, and this is a domain in $A^Z(\mathbb D)$.
\item The Bers embedding $\alpha:T^Z \to A^Z(\mathbb D)$ is a homeomorphism onto its image $\alpha(T^Z) = \Phi({\rm Bel}^Z(\mathbb D^*))$.
\item The Teich\-m\"ul\-ler space $T^Z$ is endowed with the unique complex Banach manifold structure
such that the Teich\-m\"ul\-ler projection $\pi:{\rm Bel}^Z(\mathbb D^*) \to T^Z$ is a holomorphic submersion.
\end{enumerate}
\end{thm}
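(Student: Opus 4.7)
The plan is to bootstrap the classical theory of the universal Teichm\"uller space $T$ to the Zygmund subspace $T^Z$, using the Ahlfors--Weill section as the main engine and Zygmund-norm estimates as the new analytic input.

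For part (1), one first verifies that $\Phi$ actually takes $\mathrm{Bel}^Z(\mathbb{D}^*)$ into $A^Z(\mathbb{D})$, i.e.\ the bound
\[
(1-|z|^2)\,|S_{f_\mu|_{\mathbb{D}}}(z)| \lesssim \Vert \mu \Vert_Z.
\]
This can be obtained by writing the Schwarzian as a singular integral of $\mu$ (via the Beurling transform together with appropriate differentiations), where the extra weight $(|z|^2-1)^{-1}$ encoded in $\Vert \mu \Vert_Z$ matches the size of the kernel so as to gain exactly one factor of $(1-|z|^2)$ on the $\mathbb{D}$-side; this is the type of estimate established by Tang--Wu. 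Holomorphy of $\Phi$ on $\mathrm{Bel}^Z$ is inherited from holomorphy on $\mathrm{Bel}$ combined with continuity of the derivative map into the finer norm of $A^Z$.

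To promote $\Phi$ to a split submersion onto its image, I would restrict the classical Ahlfors--Weill section $\sigma$. Near the origin, $\sigma$ assigns to $\varphi \in A(\mathbb{D})$ the Beltrami coefficient
\[
\sigma(\varphi)(z) = -\tfrac{1}{2}(|z|^2-1)^2\,\varphi(1/\bar z)\,\bar z^{-4},\qquad z\in\mathbb{D}^*,
\]
and satisfies $\Phi\circ\sigma=\mathrm{id}$. If $\varphi\in A^Z$, the Zygmund bound $(1-|1/\bar z|^2)|\varphi(1/\bar z)|\le \Vert\varphi\Vert_{A^Z}$ gives $|\sigma(\varphi)(z)|\lesssim \Vert\varphi\Vert_{A^Z}(|z|^2-1)$, so $\Vert\sigma(\varphi)\Vert_Z\lesssim \Vert\varphi\Vert_{A^Z}$; that is, $\sigma$ restricts to a holomorphic section $A^Z\to\mathrm{Bel}^Z$ near $0$. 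Sections at other base points are produced by translating through a suitable quasiconformal change of variable, which respects the Zygmund class after renormalization.

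Part (2) is then essentially free: the inclusion $\Phi(\mathrm{Bel}^Z)\subseteq A^Z\cap \Phi(\mathrm{Bel})$ is immediate from (1), and the reverse inclusion follows because for any $\varphi\in A^Z\cap \Phi(\mathrm{Bel})$ the Ahlfors--Weill section provides a preimage already lying in $\mathrm{Bel}^Z$; openness in $A^Z$ follows from the existence of local sections. Parts (3) and (4) are then formal: $\alpha$ is continuous for the quotient topology on $T^Z$ because $\Phi$ is, and it has a continuous local inverse given by $\pi\circ\sigma$, making it a homeomorphism onto its image. Pulling back the complex Banach domain structure of $\Phi(\mathrm{Bel}^Z)\subset A^Z(\mathbb{D})$ through $\alpha$ endows $T^Z$ with a complex manifold structure, and the identity $\pi=\alpha^{-1}\circ\Phi$ together with the local-section property forces this to be the unique such structure for which $\pi$ is a holomorphic submersion. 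The main obstacle is the matched pair of Zygmund estimates---showing that $\Phi$ maps $\mathrm{Bel}^Z$ into $A^Z$ and that the Ahlfors--Weill section maps $A^Z$ into $\mathrm{Bel}^Z$, each with a linear bound in the relevant norm. Once these two estimates are in hand, everything else is a Banach-space adaptation of the classical Bers argument.
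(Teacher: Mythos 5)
This theorem is not proved in the present paper: it is a summary of results quoted from Tang--Wu \cite{TW1} and from Part I \cite{M3}, so your outline must be measured against those proofs. Your general strategy is the right one and matches theirs in outline (a Zygmund-weighted norm estimate for $\Phi$, the Ahlfors--Weill section as a local right inverse near the origin, translation to other base points, and formal deductions for (2)--(4)); your computation that the Ahlfors--Weill coefficient $\sigma(\varphi)$ satisfies $\Vert\sigma(\varphi)\Vert_Z\lesssim\Vert\varphi\Vert_{A^Z}$ is correct. But two steps that you pass over in a phrase are in fact the substantive content of the cited proofs. First, ``sections at other base points are produced by translating\dots which respects the Zygmund class after renormalization'' is precisely what has to be proved: the right translation $r_{\mu_0}$ involves composition with the quasiconformal map determined by $\mu_0$, and the weight $(|z|^2-1)^{-1}$ in $\Vert\cdot\Vert_Z$ is \emph{not} preserved under composition with an arbitrary quasiconformal self-map of $\mathbb D^*$. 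One needs $|f(z)|-1\asymp|z|-1$ together with bounded derivative distortion, which forces the choice of a special representative $\mu_0$ in each Teichm\"uller class whose map is a bi-Lipschitz real-analytic diffeomorphism in the hyperbolic metric (this is \cite[Lemmas 5 and 6]{M3}, echoed in Proposition \ref{real-analytic} of this paper). Without this, the split-submersion property in (1) is only established on the small ball where the global Ahlfors--Weill section exists.

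Second, your argument for the reverse inclusion in (2) fails as stated: for $\varphi\in A^Z(\mathbb D)\cap\Phi({\rm Bel}(\mathbb D^*))$ with large norm, the global Ahlfors--Weill section is simply not defined, so it cannot ``provide a preimage already lying in ${\rm Bel}^Z$.'' The actual argument (see the proof of Corollary \ref{little} and \cite[Theorem 2.8]{TW1}) uses the \emph{partial} quasiconformal extension of Ahlfors--Weill type on a neighborhood of $\mathbb S$ due to Becker--Pommerenke \cite[Satz 4]{BP}, exploiting the fact that membership in ${\rm Bel}^Z$ is a condition only near the boundary, so that a boundary extension with the quadratic decay estimate can be glued to an arbitrary compactly supported coefficient in the interior. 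A smaller point: holomorphy of $\Phi$ into the finer space $A^Z(\mathbb D)$ is not ``inherited'' from holomorphy into $A(\mathbb D)$; one needs local boundedness in the $A^Z$-norm combined with G\^ateaux holomorphy (the argument used in Theorem \ref{holofiber} here). With these three repairs your outline becomes the proof given in \cite{TW1} and \cite{M3}.
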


Next, we consider a fiber space over  
the universal Teich\-m\"ul\-ler space $\alpha(T) \cong T$, which is realized in the space of pre-Schwarzian derivatives of  
$f_\mu|_{\mathbb D}$. Here, for a locally univalent  
holomorphic function $f:\mathbb D \to \mathbb C$,  
the pre-Schwarzian derivative of $f$ is defined as  
$P_f=(\log f')'$, which satisfies $S_f=(P_f)'-(P_f)^2/2$.  
For $\mu \in {\rm Bel}(\mathbb D^*)$, we normalize $f_\mu$ so that it fixes $\infty$.  
Then, $\psi=\log (f_\mu|_{\mathbb D})'$ is well-defined and belongs to  
the space $B(\mathbb D)$ of all Bloch functions on $\mathbb D$ with norm  
\[
\Vert \psi \Vert_B=\sup_{|z|<1}\,(1-|z|^2)|\psi'(z)|<\infty.
\]
The {\it pre-Schwarzian derivative map}  
$\Psi:{\rm Bel}(\mathbb D^*) \to B(\mathbb D)$ is defined by $\mu \mapsto \log (f_\mu|_{\mathbb D})'$, and  
its image is denoted by $\widetilde{\mathcal T} \subset B(\mathbb D)$. Then, the projection  
$\Lambda:\widetilde{\mathcal T} \to \alpha(T)$ is given by $\Lambda(\psi)=\psi''-(\psi')^2/2$.  
Since $\Lambda \circ \Psi=\Phi$ and $\Phi:{\rm Bel}(\mathbb D^*) \to \alpha(T)$ is a holomorphic split submersion,  
it follows that $\Psi$ is a holomorphic split submersion onto $\widetilde{\mathcal T}$,  
and so is $\Lambda$ onto $\alpha(T)$.  

In \cite{TW1},  
the corresponding space to ${\rm Bel}^Z(\mathbb D^*)$ is given as  
\[
B^Z(\mathbb D)=\{ \psi \in B(\mathbb D) \mid \Vert \psi \Vert_{B^Z}=|\psi'(0)|+\sup_{|z|<1}\,(1-|z|^2)|\psi''(z)|<\infty\}.
\]
We restrict $\Psi$ to ${\rm Bel}^Z(\mathbb D^*)$ and  
consider the map $\Psi:{\rm Bel}^Z(\mathbb D^*) \to B^Z(\mathbb D)$ whose image is denoted by  
$\widetilde{\mathcal T}^Z$.  
Then,  
$\widetilde{\mathcal T}^Z=\widetilde{\mathcal T} \cap B^Z(\mathbb D)$ (see \cite[Theorem 1.1]{TW1}).  
We also consider  
$\Lambda:\widetilde{\mathcal T}^Z \to \alpha(T^Z)$ in the same notation.  


\begin{thm}\label{submersion}  
The maps  
$\Psi:{\rm Bel}^Z(\mathbb D^*) \to \widetilde{\mathcal T}^Z \subset B^Z(\mathbb D)$ and  
$\Lambda:\widetilde{\mathcal T}^Z \to \alpha(T^Z) \subset A(\mathbb D^*)$  
are holomorphic split submersions.  
\end{thm}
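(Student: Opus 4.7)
The strategy parallels the proof of Theorem~\ref{previous} for the Schwarzian map $\Phi$. Since $\Phi=\Lambda\circ\Psi$ and $\Phi:\mathrm{Bel}^Z(\mathbb D^*)\to\alpha(T^Z)$ is already known to be a holomorphic split submersion, the principal task is to establish the analogous property for $\Psi$; the corresponding statement for $\Lambda$ will then follow quickly. One first observes that $\widetilde{\mathcal T}^Z=\widetilde{\mathcal T}\cap B^Z(\mathbb D)$ is open in $B^Z(\mathbb D)$, since $\widetilde{\mathcal T}$ is open in $B(\mathbb D)$ as the image of the holomorphic split submersion $\Psi:\mathrm{Bel}(\mathbb D^*)\to B(\mathbb D)$.

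For the split submersion of $\Psi$, fix $\mu_0\in\mathrm{Bel}^Z(\mathbb D^*)$ with $\psi_0=\Psi(\mu_0)$. The unrestricted map $\Psi:\mathrm{Bel}(\mathbb D^*)\to B(\mathbb D)$ already admits a local holomorphic section $\sigma_0:U_0\to\mathrm{Bel}(\mathbb D^*)$ near $\psi_0$. I would show this restricts to a holomorphic section $\sigma:W\to\mathrm{Bel}^Z(\mathbb D^*)$ on $W=U_0\cap B^Z(\mathbb D)$ by (i) verifying $\sigma_0(W)\subset\mathrm{Bel}^Z(\mathbb D^*)$ and (ii) checking continuity in the Zygmund topologies, from which holomorphy follows by a standard weak-holomorphy argument as in Part~I. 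A convenient concrete realization routes through the Schwarzian: for $\psi\in W$ with $\varphi=\Lambda(\psi)\in A^Z(\mathbb D)$, apply the Ahlfors--Weill section $s$ of $\Phi$ from Theorem~\ref{previous}(1) to obtain $\mu_1=s(\varphi)\in\mathrm{Bel}^Z(\mathbb D^*)$. Since $s$ preserves the Zygmund class, $\mu_1$ has the correct Schwarzian $\varphi$, and $\Psi(\mu_1)$ differs from $\psi$ only by a perturbation lying in the $\Lambda$-fibre over $\varphi$, which is parametrized by the finite-dimensional family of post-compositions of $f_{\mu_1}|_{\mathbb D}$ by M\"obius transformations compatible with the normalization. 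Realizing this discrepancy by a holomorphically varying, finite-dimensional correction to $\mu_1$ yields $\sigma(\psi)\in\mathrm{Bel}^Z(\mathbb D^*)$ with $\Psi(\sigma(\psi))=\psi$.

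For $\Lambda$, holomorphy of $\Lambda:\widetilde{\mathcal T}^Z\to A^Z(\mathbb D)$ follows immediately once $\Psi$ has local holomorphic sections, via the identity $\Lambda|_{\Psi(W)}=\Phi\circ\sigma$. To obtain a local holomorphic section of $\Lambda$ at $\varphi_0=\Phi(\mu_0)$, define $t=\Psi\circ s:V\to\widetilde{\mathcal T}^Z$ on the domain $V$ of $s$. Then $t$ is holomorphic with image in $\widetilde{\mathcal T}^Z=\Psi(\mathrm{Bel}^Z(\mathbb D^*))$, and satisfies $\Lambda\circ t=\Phi\circ s=\mathrm{id}_V$, so $t$ is the desired section.

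The main obstacle is the Zygmund control on the local section of $\Psi$. A direct Becker-type formula $\mu(z)=-(|z|^2-1)\bar z^{-3}\psi'(1/\bar z)$ produces a Beltrami coefficient whose Zygmund ratio $|\mu(z)|/(|z|^2-1)=|\psi'(1/\bar z)|/|z|^3$ is only of order $\log(1/(|z|^2-1))$ as $|z|\to 1$, because $\psi'$ admits merely logarithmic growth on $\mathbb D$ for $\psi\in B^Z(\mathbb D)$, so the naive pre-Schwarzian construction does not stay in $L^Z(\mathbb D^*)$. The Schwarzian-based decomposition above bypasses this divergence by exploiting the Zygmund-preservation of Ahlfors--Weill established in Part~I, but verifying that the accompanying fibre correction is simultaneously holomorphic in $\psi$ and bounded in the $L^Z$ norm is the technical heart of the proof.
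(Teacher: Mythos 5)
The first thing to note is that this paper does not actually prove Theorem \ref{submersion}: immediately after the statement it says that these results ``are all contained in \cite{M3}'', so the only argument on record here is the mechanism sketched in the introduction ($\Lambda\circ\Psi=\Phi$ with $\Phi$ a holomorphic split submersion, so that local sections of $\Phi$ push forward under $\Psi$ to local sections of $\Lambda$). Your outline is consistent with that mechanism, and your diagnosis of the real obstruction is exactly right: for $\psi\in B^Z(\mathbb D)$ the derivative $\psi'$ may grow like $\log\frac{1}{1-|z|}$, so a Becker-type coefficient has unbounded Zygmund ratio and the naive pre-Schwarzian (Ahlfors--Weill) section does not land in $L^Z(\mathbb D^*)$. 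Your replacement --- pull $\psi$ down through $\Lambda$, apply the Zygmund-preserving section $s$ of $\Phi$, and then correct along the one-complex-dimensional $\Lambda$-fibre --- is precisely the structure that Section \ref{II2} of the paper formalizes through the biholomorphism $E:\widetilde T^Z\to\widetilde{\mathcal T}^Z$, $E([\mu],w)=\log(\Theta_w\circ f_\mu|_{\mathbb D})'$, and it is also the route of \cite[Theorem 1.1]{TW1} for the identity $\widetilde{\mathcal T}^Z=\widetilde{\mathcal T}\cap B^Z(\mathbb D)$.

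Two genuine gaps remain, however. First, you never establish that $\Psi$ itself is holomorphic into $B^Z(\mathbb D)$; this requires local boundedness of $\mu\mapsto\Vert\log(f_\mu|_{\mathbb D})'\Vert_{B^Z}$ (resting on the uniform bounds for $|(f_\mu)'|$ and $(1-|z|^2)|P_{f_\mu}(z)|$ in terms of $\Vert\mu\Vert_Z$) combined with G\^ateaux holomorphy --- the same two-step argument the paper runs for $E$ in Theorem \ref{holofiber} --- and it is also needed for your section $t=\Psi\circ s$ of $\Lambda$ to be holomorphic. Second, the ``finite-dimensional correction'' is asserted rather than constructed. The fibre parameter $w=w(\psi)\in f_{\mu_1}(\mathbb D^*)$ is recoverable holomorphically from $\psi'(0)-\Psi(\mu_1)'(0)=2/w$, but one must then exhibit $\nu=\sigma(\psi)\in{\rm Bel}^Z(\mathbb D^*)$ with $f_\nu|_{\mathbb D}=\Theta_{w}\circ f_{\mu_1}|_{\mathbb D}$. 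The standard device is to re-extend $\Theta_{w}\circ f_{\mu_1}|_{\mathbb D}$ across $\mathbb S$ by composing $\Theta_{w}\circ f_{\mu_1}|_{\mathbb D^*}$ with a quasiconformal self-map of the image domain that is the identity near its boundary and sends $\Theta_w(\infty)=-w$ to $\infty$, so that $\nu$ differs from $\mu_1$ only on a compact subset of $\mathbb D^*$ and hence remains in ${\rm Bel}^Z(\mathbb D^*)$, with holomorphic dependence on $(\varphi,w)$ and hence on $\psi$. Without this construction and the accompanying $L^Z$ and holomorphy checks, the argument stops short at exactly the point you yourself flag as its technical heart.
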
  

\smallskip  
These results are all contained in \cite{M3}, and the present paper continues from this point.  
In Section \ref{II2}, we first investigate the structure of  
the projection $\Lambda:\widetilde{\mathcal T}^Z \to \alpha(T^Z)$  
more closely and prove the following.  

\begin{thm}\label{bundle}  
The space $\widetilde{\mathcal T}^Z$ is a real-analytic disk-bundle over the Teich\-m\"ul\-ler space $T^Z \cong \alpha(T^Z)$  
with projection $\Lambda$.  
\end{thm}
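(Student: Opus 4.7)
The plan is to identify each fiber $\Lambda^{-1}(\varphi)$ with a Jordan domain (the complement of a quasidisk in $\widehat{\mathbb C}$) and then to construct real-analytic local trivializations of $\Lambda$ via a canonical quasiconformal extension depending anti-holomorphically on the Schwarzian parameter.

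First I would make each fiber explicit. Fix $\varphi \in \alpha(T^Z)$ and any $\psi_0 \in \Lambda^{-1}(\varphi)$; let $F_0:\mathbb D \to \mathbb C$ be the univalent function with $\log F_0' = \psi_0$, normalized so that $F_0 = f_\mu|_{\mathbb D}$ for some $\mu \in \Psi^{-1}(\psi_0) \subset {\rm Bel}^Z(\mathbb D^*)$ (so $F_0(\mathbb D)$ is a bounded quasidisk). Any $\psi \in \Lambda^{-1}(\varphi)$ then has the form $\psi = \psi_0 + \log M'(F_0)$ for some M\"obius $M$ for which $M\circ F_0$ is locally univalent on $\mathbb D$. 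Writing $M(w)=(aw+b)/(cw+d)$ with $ad-bc=1$, a direct computation gives $\log M'(w) = -2\log(cw+d)$, which is invariant modulo additive constants under left composition of $M$ with affine maps. Hence $\psi$ is determined, modulo constants, by the invariant $\zeta := M^{-1}(\infty)$, and local univalence forces $\zeta \in \widehat{\mathbb C}\setminus F_0(\mathbb D) = f_\mu(\mathbb D^*)$, the exterior quasidisk containing $\infty$. One further verifies $\psi \in B^Z(\mathbb D)$ for each such $\zeta$ via a Koebe distortion estimate on $F_0'/(F_0-\zeta)$ together with $\psi_0 \in B^Z$. Hence the fiber is in canonical bijection with a quasidisk and is topologically a disk.

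For local triviality around $\varphi_0\in \alpha(T^Z)$, I would use Theorem \ref{submersion} to choose a holomorphic section $\sigma:U\to \widetilde{\mathcal T}^Z$ of $\Lambda$ on a neighborhood $U$, yielding a coherent family of normalized univalent maps $F_\varphi$ with $\log F_\varphi' = \sigma(\varphi)$. To identify the varying quasidisks $\widehat{\mathbb C}\setminus F_\varphi(\overline{\mathbb D})$ with the fixed model $\mathbb D^*$ in a real-analytic manner, I would invoke the Ahlfors--Weill quasiconformal extension: after a base-change via the relative Bers embedding at some $\mu_0\in {\rm Bel}^Z(\mathbb D^*)$ lying over $\varphi_0$ so that the relative Schwarzian becomes small in $A^Z$-norm, the Ahlfors--Weill formula produces a Beltrami coefficient $\mu_\varphi\in {\rm Bel}^Z(\mathbb D^*)$ depending anti-holomorphically (hence real-analytically) on $\varphi$, with $\|\mu_\varphi\|_Z$ controlled by $\|\varphi\|_{A^Z}$ via a direct estimate. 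The induced quasiconformal homeomorphism $h_\varphi := f_{\mu_\varphi}|_{\mathbb D^*}:\mathbb D^*\to \widehat{\mathbb C}\setminus F_\varphi(\overline{\mathbb D})$ provides the desired parametrization, and the local trivialization is
\[
\Theta:U\times \mathbb D^*\to \Lambda^{-1}(U),\qquad \Theta(\varphi,w) = \sigma(\varphi)-2\log\!\Bigl(1-\tfrac{F_\varphi}{h_\varphi(w)}\Bigr)
\]
(adjusted by a constant to match the $B^Z(\mathbb D)$ normalization), which intertwines $\Lambda$ with projection onto the first factor.

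It remains to verify uniform $B^Z$ regularity of $\Theta(\varphi,w)$ in $w$ (which reduces to estimates on $(1-|z|^2)\bigl|\partial_z^2\log(1-F_\varphi(z)/\zeta)\bigr|$ via Koebe distortion) and compatibility of trivializations on overlaps (following from the canonicity of Ahlfors--Weill together with real-analyticity of change-of-base Teichm\"uller maps between overlapping Bers slices). The main obstacle I anticipate is this real-analytic dependence itself: the quasidisks $F_\varphi(\overline{\mathbb D})$ vary only quasiconformally in $\varphi$, so a holomorphic bundle structure is not possible, and the anti-holomorphic dependence of $\mu_\varphi$ in the Ahlfors--Weill formula is essential for obtaining real-analyticity. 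A secondary technicality is preserving $B^Z$ regularity through the base-change argument needed when working away from the origin in the Bers slice.
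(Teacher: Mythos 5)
Your proposal follows essentially the same route as the paper. The paper identifies $\widetilde{\mathcal T}^Z$ with the Bers fiber space $\widetilde T^Z=\{([\mu],w)\mid w\in f_\mu(\mathbb D^*)\}$ via $E([\mu],w)=\log(\Theta_w\circ f_\mu|_{\mathbb D})'$ --- which is exactly your M\"obius post-composition description of the fibers, since $\log\Theta_w'(u)=-2\log(1-u/w)$ --- and then trivializes the projection by $(\varphi,\zeta)\mapsto(\alpha^{-1}(\varphi),f_{s_{\mu_0}(\varphi)}(\zeta))$ for a local holomorphic section $s_{\mu_0}$ of $\Phi$ whose values are complex dilatations of real-analytic diffeomorphisms of $\mathbb D^*$; this family plays the role of your $h_\varphi$.

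Two points deserve correction. First, you misdiagnose the source of the real-analyticity, which you call the main obstacle: the Ahlfors--Weill coefficient $\mu_\varphi(z)=-\tfrac12(|z|^2-1)^2\varphi(1/\bar z)\,\bar z^{-4}$ depends complex-linearly, hence holomorphically (not anti-holomorphically), on $\varphi$. What actually matters for the fiber direction is that, for fixed $\varphi$, $\mu_\varphi$ is a real-analytic function of the point $z\in\mathbb D^*$, so that $f_{\mu_\varphi}|_{\mathbb D^*}$ is a real-analytic diffeomorphism by elliptic regularity of the Beltrami equation; holomorphy in $\varphi$ then takes care of the base direction. Moreover, what you dismiss as a ``secondary technicality'' --- carrying this through the base change at an arbitrary $\mu_0$, which requires $f_{\mu_0}$ itself to be real-analytic and the reflected coefficients to stay in ${\rm Bel}^Z(\mathbb D^*)$ --- is precisely the content of the paper's Proposition \ref{real-analytic} and the cited Lemma 6 of \cite{M3}; it is the substantive step, not an afterthought. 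Second, $\widehat{\mathbb C}\setminus F_0(\mathbb D)$ is the \emph{closure} of $f_\mu(\mathbb D^*)$, so local univalence alone only forces the pole $\zeta$ off $F_0(\mathbb D)$; you must separately rule out $\zeta$ on the boundary quasicircle, for which $\psi_0+\log M'\circ F_0$ fails to lie in $\widetilde{\mathcal T}^Z$ (the paper notes exactly this when proving surjectivity of $E$). Neither issue is fatal, but both need to be addressed for the argument to close.
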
  

\smallskip  
To establish this, we consider the Bers fiber space ({\it universal Teich\-m\"ul\-ler curve}) $\widetilde T$ over $T$.  
We define the corresponding fiber space  
$\widetilde T^Z$ over $T^Z$ and  
prove that $\widetilde{\mathcal T}^Z$ is biholomorphically equivalent to $\widetilde T^Z$  
as a fiber bundle over $T^Z$. Since $\widetilde T^Z$ is  
a real-analytic disk-bundle over $T^Z$, analogous to the case of $\widetilde T \to T$,  
this theorem follows. 

In the remainder of this paper, we focus on a closed subspace of $T^Z$ that is defined in a canonical manner.  
The universal Teich\-m\"ul\-ler space $T$ contains the {\it little subspace} $T_0$, consisting of  
all asymptotically conformal classes, meaning that $T_0$ is defined by  
the Beltrami coefficients $\mu$  
that tend to $0$ near the boundary. Similarly, the little subspace $T^Z_0$ of $T^Z$ is defined by  
the Beltrami coefficients in ${\rm Bel}^Z_0(\mathbb D^*)$, where  
${\rm Bel}^Z_0(\mathbb D^*)$ consists of  
$\mu \in {\rm Bel}^Z(\mathbb D^*)$ satisfying the condition that $(|z|^2-1)^{-1}\mu(z)$  
tends to $0$ near the boundary. That is,  
$T^Z_0=\pi({\rm Bel}^Z_0(\mathbb D^*))$.  
This corresponds to the subspace consisting of all  
normalized self-diffeomorphisms $f$ of $\mathbb S$ whose continuous derivative $f'$ satisfies the  
little Zygmund condition.

In Section \ref{II3}, we establish results  
corresponding to Theorems \ref{previous} and \ref{submersion} for these little subspaces,  
along with the little subspaces $A^Z_0(\mathbb D) \subset A^Z(\mathbb D)$ and $B^Z_0(\mathbb D) \subset B^Z(\mathbb D)$  
of holomorphic functions defined similarly by the vanishing of the norms on the boundary.  
These results are summarized in Theorems \ref{Schwarzian} and \ref{hss}, and Corollary \ref{little}.  
Theorem \ref{bundle} also remains valid for the fiber space $\widetilde{\mathcal T}^Z_0=\Psi({\rm Bel}^Z_0(\mathbb D^*))$  
over $\alpha(T_0^Z)$ under the restriction of $\Lambda$ to $\widetilde{\mathcal T}^Z_0$.

\begin{coro}  
The space $\widetilde{\mathcal T}_0^Z$ is a real-analytic disk-bundle over the Teich\-m\"ul\-ler subspace $T_0^Z \cong \alpha(T_0^Z)$  
with projection $\Lambda$.  
\end{coro}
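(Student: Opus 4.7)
The plan is to repeat the proof of Theorem \ref{bundle} verbatim, but with every space replaced by its little analogue. The ingredients used in that proof are (i) that $\Psi$ and $\Lambda$ are holomorphic split submersions between the relevant Banach spaces, (ii) that $T^Z$ is a complex Banach manifold biholomorphically realized as $\alpha(T^Z)$, and (iii) the real-analytic disk-bundle structure of the Bers fiber space $\widetilde T \to T$, restricted above $T^Z$. Each of these has a counterpart in the little setting, so I would simply assemble them.

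Concretely, I would first invoke Corollary \ref{little} and Theorem \ref{hss} to obtain that $T_0^Z \cong \alpha(T_0^Z) \subset A_0^Z(\mathbb D)$ is a closed complex Banach submanifold of $T^Z$ and that $\Psi:{\rm Bel}_0^Z(\mathbb D^*) \to \widetilde{\mathcal T}_0^Z \subset B_0^Z(\mathbb D)$ and $\Lambda:\widetilde{\mathcal T}_0^Z \to \alpha(T_0^Z)$ are holomorphic split submersions. Next, I would define the little Bers fiber space $\widetilde T_0^Z$ as the preimage of $T_0^Z$ under the projection $\widetilde T^Z \to T^Z$, endowed with the induced real-analytic structure. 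Since the fibers of $\widetilde T^Z \to T^Z$ are disks depending real-analytically on the base point, the restriction $\widetilde T_0^Z \to T_0^Z$ is itself a real-analytic disk-bundle with the same fibers.

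Finally, I would verify that the biholomorphism $\widetilde{\mathcal T}^Z \cong \widetilde T^Z$ over $T^Z$ constructed in the proof of Theorem \ref{bundle} restricts to a biholomorphism $\widetilde{\mathcal T}_0^Z \cong \widetilde T_0^Z$ over $T_0^Z$. This requires checking that the identification preserves the little conditions on both sides: if $\mu \in {\rm Bel}_0^Z(\mathbb D^*)$, then $\Psi(\mu) \in B_0^Z(\mathbb D)$ (and conversely), and similarly for the fiber coordinate. The main obstacle is this boundary-vanishing compatibility, since the M\"obius-type parameter distinguishing two pre-Schwarzians with the same Schwarzian must be shown to vary into the little subspace as one moves along $\alpha(T_0^Z)$. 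This is exactly supplied by the splitness assertion in Theorem \ref{hss}, which provides local holomorphic sections of $\Lambda$ taking values inside $\widetilde{\mathcal T}_0^Z$; once such little-class trivializations are in hand, the disk-bundle structure transfers verbatim from Theorem \ref{bundle}, completing the proof.
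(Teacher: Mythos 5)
Your overall strategy is the right one and is the route the paper intends: restrict the fiber-preserving biholomorphism $E:\widetilde T^Z \to \widetilde{\mathcal T}^Z$ of Theorem \ref{holofiber} to $\widetilde T^Z_0:=P^{-1}(T^Z_0)$, note that the restriction of a real-analytic disk-bundle to the closed submanifold $T^Z_0 \subset T^Z$ of Corollary \ref{little} is again such a bundle (with the local trivializations $L_{\varphi_0}$ available over $V_{\varphi_0}\cap A^Z_0(\mathbb D)$ because $\mu_0$ and $s_{\mu_0}$ can be taken in the little class, as in the proof of Theorem \ref{Schwarzian}), and then transport the structure to $\widetilde{\mathcal T}^Z_0$. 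You also correctly isolate the one nontrivial compatibility: one must show that $E$ carries $P^{-1}(T^Z_0)$ \emph{onto} $\widetilde{\mathcal T}^Z_0$, equivalently that the entire $\Lambda$-fiber $\{\log(\Theta_w\circ f_\mu|_{\mathbb D})' : w\in f_\mu(\mathbb D^*)\}$ over a point of $\alpha(T^Z_0)$ lies in $B^Z_0(\mathbb D)$. Otherwise $\widetilde{\mathcal T}^Z_0=B^Z_0(\mathbb D)\cap\widetilde{\mathcal T}$ could a priori meet each fiber in a proper subset of the disk, and the bundle structure would not transfer.

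The gap is in how you discharge this step. The splitness of $\Lambda:\widetilde{\mathcal T}^Z_0\to\alpha(T^Z_0)$ from Theorem \ref{hss} only furnishes a local holomorphic section, i.e.\ \emph{one} point of each fiber lying in $\widetilde{\mathcal T}^Z_0$; it says nothing about the other values of the M\"obius parameter $w$, which is precisely the issue you raised. What is actually needed is the pointwise statement that $\psi=\log\Theta_w'\circ f_\mu$ belongs to $B^Z_0(\mathbb D)$ for every $w\in f_\mu(\mathbb D^*)$. This is proved by the explicit computation
\[
\psi''(z)=\frac{2f_\mu'(z)}{(f_\mu(z)-w)^2}-\frac{2f_\mu''(z)}{f_\mu(z)-w},
\]
together with the boundedness of $|f_\mu'|$, the fact that $|f_\mu(z)-w|$ is bounded away from $0$ for $z\in\mathbb D$, and the decay $\lim_{|z|\to1}(1-|z|^2)|P_{f_\mu}(z)|=0$, which holds because $T^Z\subset T_0$; this is carried out in the proof of Theorem \ref{homeo}. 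Combined with $\Psi(\mu)\in B^Z_0(\mathbb D)$ for $\mu\in{\rm Bel}^Z_0(\mathbb D^*)$ (Theorem \ref{hss}), it gives $E(P^{-1}(T^Z_0))\subset B^Z_0(\mathbb D)\cap\widetilde{\mathcal T}=\widetilde{\mathcal T}^Z_0$, while the reverse inclusion is immediate from $\Lambda\circ\Psi=\Phi$. With that substitution your argument closes; as written, the cited splitness does not supply the claim.
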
  

\smallskip  
In Section \ref{II4}, we consider the quotient space $T^Z/T^Z_0$,  
which is analogous to the {\it asymptotic Teichm\"ul\-ler space} $AT=T/T_0$.  
We establish the foundation of $T^Z/T^Z_0$ by providing a complex manifold structure modeled on the  
quotient Banach space $A^Z(\mathbb D)/A^Z_0(\mathbb D)$. This is achieved by following the standard approach  
for the asymptotic Teichm\"ul\-ler space,  
where we introduce the quotient Bers embedding $\hat \alpha:T^Z/T^Z_0 \to A^Z(\mathbb D)/A^Z_0(\mathbb D)$  
induced by $\alpha$.  
In Theorem \ref{qBers2}, we prove that the quotient of the Schwarzian derivative map $\Phi$  
remains a holomorphic split submersion, as does the quotient of the pre-Schwarzian derivative map $\Psi$.  
From this, we conclude that  
$\hat \alpha$ is a local homeomorphism onto its image.  

\begin{thm}\label{qBers1}
The quotient Bers embedding $\hat \alpha$
is a local homeomorphism onto its image. Hence, $T^Z/T^Z_0$ admits a complex structure modeled on the quotient Banach space  
$A^Z(\mathbb{D})/A^Z_0(\mathbb{D})$ via $\hat \alpha$.
\end{thm}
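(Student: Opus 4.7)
The plan is to deduce Theorem \ref{qBers1} as a formal consequence of Theorem \ref{qBers2}, mirroring the standard construction of the complex structure on the asymptotic universal Teich\-m\"ul\-ler space $AT=T/T_0$ due to Earle--Gardiner--Lakic. Denote by $q:{\rm Bel}^Z(\mathbb D^*) \to T^Z/T^Z_0$ the composition of $\pi$ with the projection $T^Z \to T^Z/T^Z_0$, and write $\hat \Phi:{\rm Bel}^Z(\mathbb D^*) \to A^Z(\mathbb D)/A^Z_0(\mathbb D)$ for the composition of $\Phi$ with the Banach-space quotient, so that Theorem \ref{qBers2} asserts $\hat\Phi$ is a holomorphic split submersion. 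The identity $\hat \Phi=\hat\alpha \circ q$ will organize the entire argument.

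First, I would verify that $\hat\alpha$ is well-defined, i.e., that $\Phi(\mu)-\Phi(\nu) \in A^Z_0(\mathbb D)$ whenever $[\mu]$ and $[\nu]$ project to the same class in $T^Z/T^Z_0$. Since $\Phi=\alpha\circ\pi$, this reduces to the statement $\alpha(T^Z_0)\subset A^Z_0(\mathbb D)$, combined with the invariance of the ``little'' property under right-translation: if $[\mu][\nu]^{-1}\in T^Z_0$, then $\Phi(\mu)-\Phi(\nu)\in A^Z_0(\mathbb D)$. The inclusion $\alpha(T^Z_0)\subset A^Z_0(\mathbb D)$ is part of Theorem \ref{Schwarzian}. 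The invariance is obtained from the right-translation action of the topological group of Zygmund-smooth circle diffeomorphisms on $T^Z$---which is a biholomorphism preserving $T^Z_0$---together with the classical transformation formula for the Schwarzian derivative, which shows that the relevant cocycle lies in $A^Z_0(\mathbb D)$ when the translation belongs to $T^Z_0$. Continuity of $\hat\alpha$ then follows from continuity of $\alpha$ and the openness of both quotient projections.

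Next, I would construct local right inverses of $\hat\alpha$ from the split submersion structure. By Theorem \ref{qBers2}, at each $\bar\varphi$ in the image of $\hat\Phi$ there is a holomorphic section $s:U \to {\rm Bel}^Z(\mathbb D^*)$ on a neighborhood $U$ of $\bar\varphi$ with $\hat\Phi\circ s=\mathrm{id}_U$. Then $q\circ s:U \to T^Z/T^Z_0$ is a continuous section of $\hat\alpha$ on $U\cap \hat\alpha(T^Z/T^Z_0)$, exhibiting $\hat\alpha$ as open at every point. For local injectivity, I would show that if $\hat\alpha([\mu_1]_0)=\hat\alpha([\mu_2]_0)$, then $[\mu_1]_0=[\mu_2]_0$. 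Using right-translation by a representative of $[\mu_2]^{-1}$, this reduces to the claim: $\Phi(\mu)\in A^Z_0(\mathbb D)$ implies $[\mu]\in T^Z_0$. The latter is the reverse inclusion in the identity $\alpha(T^Z_0)=A^Z_0(\mathbb D)\cap \alpha(T^Z)$, which is supplied by Theorem \ref{Schwarzian}. With continuity, openness, and local injectivity in hand, $\hat\alpha$ is a local homeomorphism onto its image, and transporting the Banach-manifold structure of the image back via $\hat\alpha$ endows $T^Z/T^Z_0$ with the desired complex structure modeled on $A^Z(\mathbb D)/A^Z_0(\mathbb D)$.

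The main obstacle is expected to be the right-translation argument used in both the well-definedness and the injectivity steps. One must carefully verify that composition by a Zygmund-smooth circle diffeomorphism induces a biholomorphism of $T^Z$ preserving $T^Z_0$, and that the Schwarzian cocycle associated with this action maps $A^Z_0(\mathbb D)$ into itself. Although these are standard constructions in smooth Teich\-m\"ul\-ler theory, the relevant estimates in the Zygmund and little Zygmund norms require care; this is where the bulk of the technical work will lie, the remainder of the proof being essentially a formal deduction from Theorems \ref{qBers2} and \ref{Schwarzian}.
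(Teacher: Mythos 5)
Your formal skeleton---continuity and openness of $\hat\alpha$ extracted from the split submersion property of $\widehat\Phi$ in Theorem \ref{qBers2}, via $\hat\alpha\circ\hat\pi=\widehat\Phi$---agrees with the paper, which obtains the local homeomorphism from the facts that $\hat\pi$ is a quotient projection and that $\widehat\Phi$ has continuous local right inverses $\hat s_\mu$. But the two substantive steps in your proposal rest on a right-translation/cocycle mechanism that is not available. For well-definedness, the paper does not argue by a Schwarzian cocycle: Lemma \ref{welldef} is proved directly by showing, through the integral representation of the derivative, that $d_\mu\Phi(\nu)\in A^Z_0(\mathbb D)$ for every $\nu\in L^Z_0(\mathbb D^*)$ and then integrating $\Phi(\mu_1)-\Phi(\mu_2)=\int_0^1 d_{\mu_2+t\nu}\Phi(\nu)\,dt$. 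There is no additive cocycle for the Bers embedding under right translation; the correct relation is $\Phi(\mu_2)-\Phi(\mu_1)=(S_F\circ f_{\mu_1})(f_{\mu_1}')^2$ with $F=f_{\mu_2}\circ f_{\mu_1}^{-1}$ conformal on the quasidisk $f_{\mu_1}(\mathbb D)$, and proving that this pullback lies in $A^Z_0(\mathbb D)$ is exactly the analytic content you defer to ``standard constructions.'' (In any case, once Theorem \ref{qBers2} is granted, $\widehat\Phi$ is already well-defined on ${\rm Bel}^Z(\mathbb D^*)/\!\!\sim_0$ and is constant on fibers of $\hat\pi$, so well-definedness of $\hat\alpha$ is immediate and the detour is unnecessary.)

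The genuine gap is in your injectivity step. You propose that $\hat\alpha([\mu_1]_0)=\hat\alpha([\mu_2]_0)$, i.e.\ $\Phi(\mu_1)-\Phi(\mu_2)\in A^Z_0(\mathbb D)$, reduces after right translation by a representative of $[\mu_2]^{-1}$ to the implication $\Phi(\mu)\in A^Z_0(\mathbb D)\Rightarrow[\mu]\in T^Z_0$ for $\mu$ the complex dilatation of $f^{\mu_1}\circ(f^{\mu_2})^{-1}$. That reduction presupposes that right translation descends to the cosets of $A^Z_0(\mathbb D)$ inside $\alpha(T^Z)$, i.e.\ that $\Phi(\mu_1)-\Phi(\mu_2)\in A^Z_0(\mathbb D)$ forces $\Phi(\mu)\in A^Z_0(\mathbb D)$; this is equivalent to the injectivity being proved, so the argument is circular. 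This is precisely why the paper isolates injectivity as Theorem \ref{injective} and invokes the machinery of \cite{M0}: the decomposition $f^\mu=f^{\mu_0}\circ f^{\mu_1}\circ f^\nu$ with $\mu_0$ compactly supported and $\mu_1$ produced by a local section satisfying the quadratic decay estimate \eqref{quadbel}, together with repeated use of Lemma \ref{welldef}. (Also, the reverse inclusion $A^Z_0(\mathbb D)\cap\Phi({\rm Bel}(\mathbb D^*))\subset\alpha(T^Z_0)$ that you invoke is Corollary \ref{little}, obtained from the Becker--Pommerenke partial quasiconformal extension, not Theorem \ref{Schwarzian}.) For Theorem \ref{qBers1} only local injectivity is needed, and the paper derives it from the quotient-projection/local-section structure rather than from any translation argument; as written, your route cannot be completed with the tools you cite.
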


Moreover, using existing techniques, we show that $\hat \alpha$ is in fact injective,  
similar to existing examples of other quotient Teichm\"ul\-ler spaces. See Theorem \ref{injective}.

In Section 5, we introduce a somewhat new approach to studying $T^Z/T^Z_0$,  
involving the quotient map of the pre-Bers embedding.  
As mentioned above, the image $\widetilde{\mathcal T}^Z$ of the pre-Schwarzian derivative map $\Psi$ in $B^Z(\mathbb D)$  
forms a fiber space over $T^Z$, and $\Psi$ does not factor through the Teichm\"ul\-ler projection $\pi$.  
However, when taking the quotient, $\Psi$ induces a well-defined map  
$\hat \beta:T^Z/T_0^Z \to B^Z(\mathbb D)/B^Z_0(\mathbb D)$,
which we call the quotient pre-Bers embedding.

\begin{thm}\label{pre-Bers}  
The quotient pre-Bers embedding $\hat \beta$ is  
a biholomorphic homeomorphism onto its image $\widetilde{\mathcal T}^Z/B^Z_0(\mathbb D)$ with respect to the complex Banach structure of $T^Z/T_0^Z$
induced by the quotient Bers embedding $\hat \alpha$.  
\end{thm}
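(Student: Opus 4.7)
The plan is to establish the properties of $\hat\beta$ through the commutative relation $\hat\alpha=\hat\Lambda\circ\hat\beta$, where $\hat\Lambda$ is the induced quotient of $\Lambda$, combined with the holomorphic split submersion properties of $\Psi$ from Theorem~\ref{submersion} and the little class analogue in Corollary~\ref{little}.

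I will first verify the well-definedness of $\hat\beta$. Although $\Psi$ does not factor through $\pi$, it descends at the quotient level. For Teichm\"uller-equivalent $\mu,\nu\in\mathrm{Bel}^Z(\mathbb D^*)$ the Schwarzians of $f_\mu|_{\mathbb D}$ and $f_\nu|_{\mathbb D}$ agree, so $f_\nu|_{\mathbb D}=M\circ f_\mu|_{\mathbb D}$ for some M\"obius $M$, and a direct calculation gives
\[
\Psi(\nu)-\Psi(\mu)=\log M'(f_\mu)=c-2\log(f_\mu-z_0),
\]
where $z_0$ is the pole of $M$, lying in the open complement of $\overline{f_\mu(\mathbb D)}$. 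The crucial step is to show that $-2\log(f_\mu-z_0)\in B^Z_0(\mathbb D)$, equivalently that the fiber of $\Lambda:\widetilde{\mathcal T}^Z\to\alpha(T^Z)$, a disk parameterized by $z_0$, is contained in a single coset of $B^Z_0(\mathbb D)$. Descent with respect to the $T_0^Z$-direction then follows from Corollary~\ref{little}, which restricts $\Psi$ to $\mathrm{Bel}^Z_0(\mathbb D^*)\to\widetilde{\mathcal T}^Z_0\subset B^Z_0(\mathbb D)$ as a holomorphic split submersion.

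Granting well-definedness, I will then deduce bijectivity and holomorphicity. Injectivity of $\hat\beta$ follows from that of $\hat\alpha=\hat\Lambda\circ\hat\beta$ (Theorem~\ref{injective}), and surjectivity onto $\widetilde{\mathcal T}^Z/B^Z_0(\mathbb D)$ is by construction. For holomorphicity, the projection $\mathrm{Bel}^Z(\mathbb D^*)\to T^Z/T_0^Z$ is a holomorphic submersion by Theorem~\ref{qBers2}, and $\Psi$ is a holomorphic split submersion by Theorem~\ref{submersion}; factoring the holomorphic composition $\mathrm{Bel}^Z(\mathbb D^*)\to B^Z(\mathbb D)\to B^Z(\mathbb D)/B^Z_0(\mathbb D)$ through the former submersion yields that $\hat\beta$ is holomorphic. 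A local holomorphic section $\sigma$ of $\Psi$ then produces a holomorphic inverse $[\psi]\mapsto\pi(\sigma(\psi))\bmod T_0^Z$, whose well-definedness relies on the injectivity of $\hat\beta$.

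The hardest point is the boundary estimate in the well-definedness step. Writing $\tilde\psi(z)=-2\log(f_\mu(z)-z_0)$, the identity $\tilde\psi''=P_{f_\mu}\tilde\psi'+(\tilde\psi')^2/2$ must yield $(1-|z|^2)|\tilde\psi''(z)|\to 0$ as $|z|\to 1$, while naive termwise estimates fail because $P_{f_\mu}$ and $\tilde\psi'$ can each grow like $1/(1-|z|^2)$. I expect the argument to combine (i) the fact that $\Psi(\mu)\in B^Z(\mathbb D)$ forces $P_{f_\mu}$ to lie in the little Bloch space, obtained by radial integration of the weighted bound on $\Psi(\mu)''$; (ii) a Koebe-type distortion estimate for the univalent $f_\mu$; and (iii) the positive separation of $z_0$ from $\overline{f_\mu(\mathbb D)}$. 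Together these should produce the exact cancellation sufficient for boundary vanishing.
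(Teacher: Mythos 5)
Your overall architecture coincides with the paper's: the theorem is reduced to showing that the induced map $\widehat\Lambda$ is a well-defined biholomorphism of $\widetilde{\mathcal T}^Z/B^Z_0(\mathbb D)$ onto $\hat\alpha(T^Z/T_0^Z)$, and the heart of that is your "crucial step" that each fiber of $\Lambda$ lies in a single coset of $B^Z_0(\mathbb D)$, i.e.\ $\tilde\psi=\log \Theta_w'\circ f_\mu\in B^Z_0(\mathbb D)$. On this point you are more pessimistic than necessary: no cancellation is needed. In the Zygmund class $|f_\mu'|$ is uniformly bounded on $\mathbb D$, $|f_\mu(z)-w|$ is bounded away from $0$ for $w\in f_\mu(\mathbb D^*)$, and $(1-|z|^2)|P_{f_\mu}(z)|\to 0$ as $|z|\to 1$ (either because $T^Z\subset T_0$, which is what the paper invokes in Theorem \ref{homeo}, or via the growth estimate $|P_{f_\mu}(z)|\lesssim(1-|z|^2)^{-1/3}$ coming from $\log f_\mu'\in B^Z(\mathbb D)$, which is essentially your ingredient (i)). The two terms of $\tilde\psi''=2(f_\mu')^2(f_\mu-w)^{-2}-2f_\mu''(f_\mu-w)^{-1}$ are then estimated separately: the first is bounded, and the second is $\lesssim(1-|z|^2)|P_{f_\mu}(z)|$ after multiplication by $(1-|z|^2)$. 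So your $\tilde\psi'$ does not grow like $(1-|z|^2)^{-1}$; it is bounded.

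The genuine gap is the descent of $\Psi$ in the $T_0^Z$-direction. You assert that it "follows from" the fact that $\Psi$ maps ${\rm Bel}^Z_0(\mathbb D^*)$ into $B^Z_0(\mathbb D)$; but $\Psi$ is nonlinear, so $\Psi({\rm Bel}^Z_0(\mathbb D^*))\subset B^Z_0(\mathbb D)$ does not imply $\Psi(\mu_1)-\Psi(\mu_2)\in B^Z_0(\mathbb D)$ whenever $\mu_1\sim_0\mu_2$, which is what well-definedness of $\hat\beta$ on $T^Z/T_0^Z$ requires. The paper never proves this implication directly: it proves the corresponding statement for $\Phi$ (Lemma \ref{welldef}, which needs the integral representation of $d_\mu\Phi(\nu)$ and a real estimate, not merely Theorem \ref{Schwarzian}), establishes that $\widehat\Lambda$ is well defined and bijective (Lemma \ref{qlambda} and Theorem \ref{homeo}), and only then sets $\widehat\Psi=\widehat\Lambda^{-1}\circ\widehat\Phi$ and $\hat\beta=\widehat\Lambda^{-1}\circ\hat\alpha$, so that descent is inherited rather than proved. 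A smaller omission of the same kind: you use $\widehat\Lambda$ as "the induced quotient of $\Lambda$" without checking that $\Lambda$ descends, i.e.\ that $\psi_1-\psi_2\in B^Z_0(\mathbb D)$ forces $\Lambda(\psi_1)-\Lambda(\psi_2)\in A^Z_0(\mathbb D)$; since $\Lambda(\psi)=\psi''-(\psi')^2/2$ is quadratic, this requires the product estimate $\psi_1'\psi_2'\in A^Z_0(\mathbb D)$ for $\psi_1,\psi_2\in B^Z(\mathbb D)$ (Lemma \ref{qlambda}). Both gaps are repairable, and the cleanest repair is exactly the paper's reordering: prove the fiber statement and Lemma \ref{qlambda} first, and define $\hat\beta$ as $\widehat\Lambda^{-1}\circ\hat\alpha$ rather than by descending $\Psi$ directly.
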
  


\section{A Disk-Bundle over the Teichm\"uller Space}\label{II2}

For the remainder of this paper, $f_\mu$ always denotes a quasiconformal self-homeomorphism of $\widehat{\mathbb C}$  
whose complex dilatation is $\mu \in {\rm Bel}(\mathbb D^*)$ on $\mathbb D^*$ and $0$ on $\mathbb D$,  
satisfying the normalization conditions  
$f_\mu(\infty)=\infty$, $f_\mu(0)=0$, and $(f_\mu)'(0)=1$.  
Under this normalization,  
the {\it Bers fiber space} (Teichm\"uller curve) over $T^Z$ is defined as  
\[
\widetilde T^Z=\{([\mu],w) \in T^Z \times \widehat{\mathbb C} 
\mid \mu \in {\rm Bel}^Z(\mathbb D^*),\ w \in f_\mu(\mathbb D^*)\}.
\]  
This is a domain in the product complex manifold $T^Z \times \widehat{\mathbb C}$,  
and the projection $P$ onto the first coordinate is a holomorphic submersion onto the Teichm\"uller space $T^Z$.  

Let $\Theta_w(z)=wz(w-z)^{-1}$ be the M\"obius transformation that sends $w$ to $\infty$ while  
satisfying $\Theta_w(0)=0$ and $(\Theta_w)'(0)=1$.  
Then, we define a map  
\[
E:\widetilde T^Z \to \widetilde{\mathcal T}^Z,\qquad E([\mu],w)=\log (\Theta_w \circ f_\mu|_{\mathbb D})'.
\]

\begin{theorem}\label{holofiber}  
The map $E:\widetilde T^Z \to \widetilde{\mathcal T}^Z$ is a biholomorphic homeomorphism that preserves the fiber structure, that is,  
$\alpha \circ P=\Lambda \circ E$.  
\end{theorem}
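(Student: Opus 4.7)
The plan is to verify the fiber-preserving identity $\alpha \circ P = \Lambda \circ E$ first, then establish fiberwise bijectivity, and finally show that both $E$ and its inverse are holomorphic. The identity is immediate from the M\"obius invariance of the Schwarzian under post-composition: since $\Lambda(\log g') = S_g$ for any locally univalent $g$ on $\mathbb D$, we have
\[
\Lambda(E([\mu],w)) = S_{\Theta_w \circ f_\mu|_{\mathbb D}} = S_{f_\mu|_{\mathbb D}} = \Phi(\mu) = \alpha([\mu]) = \alpha(P([\mu],w)).
\]

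For fiberwise bijectivity, the key observation is that the M\"obius transformations $M$ with $M(0)=0$ and $M'(0)=1$ are precisely $M = \Theta_w$ for $w \in \widehat{\mathbb C} \setminus \{0\}$ (with $\Theta_\infty = \mathrm{id}$). Using the injectivity of $\alpha$ and the classical fact that two locally univalent functions share a Schwarzian iff they differ by a M\"obius post-composition, Teichm\"uller equivalent $\mu, \nu \in {\rm Bel}^Z(\mathbb D^*)$ must satisfy $f_\nu|_{\mathbb D} = \Theta_w \circ f_\mu|_{\mathbb D}$ for a unique such $w$. The requirement that $\Theta_w \circ f_\mu$ admit a normalized quasiconformal extension to $\widehat{\mathbb C}$ fixing $\infty$ and with dilatation in ${\rm Bel}^Z(\mathbb D^*)$ translates into $w = f_\mu(z_0)$ for some $z_0 \in \mathbb D^*$, hence $w \in f_\mu(\mathbb D^*)$; existence of the extension for each such $w$ follows from standard quasiconformal extension theory in the Zygmund class.

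Holomorphicity of $E$ follows because $\mu \mapsto f_\mu|_{\mathbb D}$ depends holomorphically on $\mu$ by Ahlfors--Bers theory, $w \mapsto \Theta_w$ is an explicit holomorphic family of M\"obius transformations, and the pre-Schwarzian $g \mapsto \log g'$ is holomorphic. To descend from ${\rm Bel}^Z(\mathbb D^*)$ to $T^Z$, I would compose with a local holomorphic section of the Teichm\"uller projection $\pi$, whose existence is guaranteed by Theorem \ref{previous}.

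The main obstacle is the holomorphicity of $E^{-1}$. I would construct it explicitly: given $\psi \in \widetilde{\mathcal T}^Z$, set $[\mu] = \alpha^{-1}(\Lambda(\psi))$ and choose a local holomorphic lift $\sigma$ of $\alpha^{-1} \circ \Lambda$ into ${\rm Bel}^Z(\mathbb D^*)$. Using the Taylor expansion $\Theta_w(u) = u + u^2/w + O(u^3)$, the identity $\Theta_w \circ f_{\sigma(\psi)}|_{\mathbb D} = f_\nu|_{\mathbb D}$ (where $\psi = \log f_\nu'$), after comparing derivatives at $z=0$, yields the explicit formula $w = 2/(\psi'(0) - \Psi(\sigma(\psi))'(0))$, which is manifestly holomorphic in $\psi$. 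It remains to verify that this $w$ indeed lies in $f_{\sigma(\psi)}(\mathbb D^*)$, which follows from the bijectivity analysis of the second step.
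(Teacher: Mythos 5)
Your outline matches the paper's (fiber identity, bijectivity, holomorphy of $E$ and of $E^{-1}$), and your treatment of the inverse is a legitimate variant: the paper shows $dE$ is surjective by computing $\partial E/\partial w = 2/(f_\mu(z)-w) \neq 0$ and invokes the inverse function theorem, whereas you reconstruct $w$ explicitly from the evaluation $w = 2/(\psi'(0)-\psi'_\infty(0))$; since $\psi \mapsto \psi'(0)$ is a bounded linear functional on $B^Z(\mathbb D)$ (it is part of the norm), this is a clean and arguably more transparent way to get holomorphy of $E^{-1}$, provided you also check that the recovered $w$ lies in $f_{\sigma(\psi)}(\mathbb D^*)$, as you note.

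The genuine gap is in your third step, the holomorphy of $E$ itself. Saying that $\mu \mapsto f_\mu|_{\mathbb D}$ is holomorphic, $w \mapsto \Theta_w$ is holomorphic, and $g \mapsto \log g'$ is holomorphic does not constitute a proof, because you never specify between which Banach spaces these maps are holomorphic; in particular, $g \mapsto \log g'$ is not a holomorphic map into $B^Z(\mathbb D)$ of any naturally given space of univalent functions without a norm estimate. The target of $E$ is $B^Z(\mathbb D)$ with the norm $|\psi'(0)| + \sup (1-|z|^2)|\psi''(z)|$, and the essential content of the paper's proof is to verify that $E$ is \emph{locally bounded} in this norm: writing $\psi = E([\mu],w)$ and $\psi_\infty = E([\mu],\infty)$, one computes $\psi''-\psi''_\infty$ explicitly and bounds $(1-|z|^2)|\psi''(z)-\psi''_\infty(z)|$ using the fact, special to the Zygmund class, that $|(f_\mu)'(z)|$ is uniformly bounded on $\mathbb D$ with a bound depending only on $\Vert\mu\Vert_Z$, together with the bound on $(1-|z|^2)|P_{f_\mu}(z)|$. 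Once local boundedness is in hand, holomorphy follows from G\^ateaux holomorphy (pointwise holomorphic dependence on $(\mu,w)$) by the standard criterion. Your argument skips exactly this estimate, which is where the specific function spaces enter; without it the claim that $E$ is holomorphic into $B^Z(\mathbb D)$ is unsupported. A secondary, smaller issue: descending from ${\rm Bel}^Z(\mathbb D^*)$ to $T^Z$ via a local section is fine, but you should say why the result is independent of the section, i.e., why $E$ is well defined on $\widetilde T^Z$ in the first place (with the normalization $f_\mu(\infty)=\infty$, $f_\mu(0)=0$, $(f_\mu)'(0)=1$, Teichm\"uller equivalent coefficients give the same $f_\mu|_{\mathbb D}$).
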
  

\begin{proof}  
We first show that $E$ is locally bounded.  
For a given $\mu \in {\rm Bel}^Z(\mathbb D^*)$, let $\psi_\infty=E([\mu],\infty)$  
and $\psi=E([\mu],w)$ for any $w \in f_\mu(\mathbb D^*)$.  
Then, a straightforward computation yields  
\begin{align}\label{1st2nd}  
\psi'(z)-\psi'_\infty(z)&  
=\frac{-2(f_{\mu})'(z)}{f_{\mu}(z)-w},\\  
\psi''(z)-\psi''_\infty(z)&=\frac{2(f_{\mu})'(z)^2}{(f_{\mu}(z)-w)^2}  
-\frac{2(f_{\mu})''(z)}{f_{\mu}(z)-w}  
\end{align}  
for $z \in \mathbb D$.  

From \eqref{1st2nd}, we obtain $|\psi'(0)-\psi'_\infty(0)|=2/|w|$, which implies that  
$|\psi'(0)|$, a component of the norm $\Vert \psi \Vert_{B^Z}$, is uniformly bounded  
independently of $\mu \in {\rm Bel}^Z(\mathbb D^*)$.  
Moreover, since $|(f_\mu)'(z)|$   
is uniformly bounded for $z \in \mathbb D$ with the bound dependent only on $\Vert \mu \Vert_Z$ 
by \cite[Proposition 6.8]{M1}, we obtain  
\begin{align*}  
&\quad\sup_{|z|<1}(1-|z|^2)|\psi''(z)-\psi''_\infty(z)| \\
&\leq  
\sup_{|z|<1}\left((1-|z|^2)\frac{2|(f_{\mu})'(z)|^2}{|f_{\mu}(z)-w|^2}+  
(1-|z|^2)\frac{2|(f_{\mu})''(z)|}{|f_{\mu}(z)-w|}\right)\\  
&=\sup_{|z|<1}(1-|z|^2)\frac{2|(f_{\mu})'(z)|^2}{|f_{\mu}(z)-w|^2}  
+\sup_{|z|<1}(1-|z|^2)|P_{f_{\mu}}(z)|\frac{2|(f_{\mu})'(z)|}{|f_{\mu}(z)-w|}  
\end{align*}  
which remains bounded when $\mu \in {\rm Bel}^Z(\mathbb D^*)$ and $w \in f_\mu(\mathbb D^*)$ vary locally.  

Under this local boundedness condition,  
if $E$ is G\^{a}teaux holomorphic, then it is holomorphic (see \cite[Theorem 14.9]{Ch}).  
The G\^{a}teaux holomorphy of $E$ follows from the fact that  
for each fixed $z \in \mathbb D$,  
$E([\mu],w)(z)=\log (\Theta_{w} \circ f_{\mu})'(z)$ is  
G\^{a}teaux holomorphic as a complex-valued function (see, e.g., \cite[Lemma 6.1]{WM-2}).  
By the holomorphic dependence of quasiconformal mappings on the Beltrami coefficients,  
this property is verified.  
Thus, we conclude that $E$ is holomorphic on $\widetilde T^Z$.  

It is easy to see that $E$ is bijective.  
The surjectivity stems from the fact that $\log \Theta_w' \circ f_{\mu} \notin \widetilde{\mathcal T}^Z$ if $w \notin f_{\mu}(\mathbb D^*)$.
To prove that $E$ is biholomorphic,  
it remains to show that the derivative $dE$ is surjective at every point of $\widetilde T^Z$.  
Since $d(\alpha \circ P)=d\Lambda \circ dE$ and $\alpha \circ P$ is a submersion onto $\alpha(T^Z)$ with the kernel of the derivative 
along the fiber of $P$,  
we only need to verify that the derivative $dE$ does not vanish in the direction of $w$.  
This follows from  
\[
\frac{\partial E}{\partial w}(z)=\frac{\partial}{\partial w}\log\frac{(f_\mu)'(z)}{(f_\mu(z)-w)^2}=\frac{2}{f_\mu(z)-w} \neq 0.
\]  
Thus, the proof is complete.  
\end{proof} 

For any $\varphi_0 \in \alpha(T^Z)=\Phi({\rm Bel}^Z(\mathbb D^*))$ and  
$\mu_0 \in {\rm Bel}^Z(\mathbb D^*)$ with $\Phi(\mu_0)=\varphi_0$,  
Theorem \ref{previous} (1) implies that  
there exists a holomorphic map $s_{\mu_0}:V_{\varphi_0} \to {\rm Bel}^Z(\mathbb D^*)$  
defined on some neighborhood $V_{\varphi_0} \subset \alpha(T^Z)$ of $\varphi_0$,  
such that $\Phi \circ s_{\mu_0}={\rm id}|_{V_{\varphi_0}}$ and  
$s_{\mu_0}(\varphi_0)=\mu_0$.  
Regarding the regularity of the quasiconformal homeomorphism determined by $\mu_0$,  
\cite[Lemma 6]{M3} implies that $f_{\mu_0}$ is a real-analytic diffeomorphism of $\mathbb D^*$  
if a suitable $\mu_0$ is chosen in the Teichm\"uller equivalence class.  
Moreover, since  
the quasiconformal homeomorphism $f_\mu$ for any $\mu \in s_{\mu_0}(V_{\varphi_0})$ can be  
explicitly expressed as a composition of $f_{\mu_0}$ using a real-analytic bi-Lipschitz quasiconformal reflection  
and by solving the Schwarzian differential equation,  
it follows that $f_\mu$ is also a real-analytic diffeomorphism of $\mathbb D^*$.  
We summarize this conclusion in the following proposition.

\begin{proposition}\label{real-analytic}  
For every Teichm\"uller equivalence class in ${\rm Bel}^Z(\mathbb D^*)$,  
there exists a representative $\mu_0$ such that  
the local holomorphic right inverse $s_{\mu_0}:V_{\varphi_0} \to {\rm Bel}^Z(\mathbb D^*)$ of  
$\Phi:{\rm Bel}^Z(\mathbb D^*) \to A^Z(\mathbb D)$,  
defined on some neighborhood $V_{\varphi_0}$ of $\varphi_0=\Phi(\mu_0)$,  
ensures that  
the quasiconformal homeomorphism $f_\mu$ of $\mathbb D^*$, determined by $\mu=s_{\mu_0}(\varphi)$ for any $\varphi \in V_{\varphi_0}$,  
is a real-analytic diffeomorphism.  
\end{proposition}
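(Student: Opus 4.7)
The plan follows the two ingredients suggested by the discussion preceding the statement: the regularity of the quasiconformal extension at a carefully chosen representative, and an Ahlfors--Weill-type explicit formula for the local section $s_{\mu_0}$. First, I would apply \cite[Lemma 6]{M3} to select, within the given Teichm\"uller class, a representative $\mu_0 \in {\rm Bel}^Z(\mathbb D^*)$ for which $f_{\mu_0}$ is a real-analytic diffeomorphism of $\mathbb D^*$. A consequence is that the image $\Gamma_0 := f_{\mu_0}(\mathbb S)$ is a real-analytic Jordan curve, hence admits a real-analytic bi-Lipschitz quasiconformal reflection $R$ across $\Gamma_0$ of Ahlfors--Rickman--Fehlmann type.

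Next, I would make the local section $s_{\mu_0}$ of Theorem \ref{previous}(1) explicit, in such a way that the extension it produces is manifestly real-analytic. For $\varphi \in V_{\varphi_0}$, one first solves the linear ODE underlying the Schwarzian equation $S_F = \varphi$ on $\mathbb D$ to obtain a locally univalent holomorphic map $F_\varphi : \mathbb D \to \mathbb C$ depending holomorphically on $\varphi$ and close to $f_{\mu_0}|_{\mathbb D}$. One then extends $F_\varphi$ to $\mathbb D^*$ by setting $\tilde F_\varphi := R \circ F_\varphi \circ \sigma$, where $\sigma$ is a reflection across $\mathbb S$ (such as $z\mapsto 1/\bar z$). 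Taking $s_{\mu_0}(\varphi)$ to be the Beltrami coefficient of $\tilde F_\varphi$ on $\mathbb D^*$ yields the desired holomorphic right inverse of $\Phi$. The map $f_\mu = \tilde F_\varphi$ restricted to $\mathbb D^*$ is then the composition of three real-analytic diffeomorphisms on their respective domains --- the holomorphic $F_\varphi$, the conjugate-linear involution $\sigma$, and the real-analytic reflection $R$ --- and so is itself a real-analytic diffeomorphism of $\mathbb D^*$.

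The main technical obstacle is not the real-analyticity itself, which is formal once the Ahlfors--Weill-type formula is in place, but rather the verification that this formula genuinely lands in ${\rm Bel}^Z(\mathbb D^*)$ and varies holomorphically into $L^Z(\mathbb D^*)$ --- i.e., that one truly recovers the Zygmund-class section of Theorem \ref{previous}(1) and not merely an $L^\infty$ one. This requires controlling the hyperbolic weight $(|z|^2-1)^{-1}|\mu(z)|$ for the reflected extension. Here the real-analyticity, and hence smoothness up to $\mathbb S$, of both $\Gamma_0$ and $R$ allows one to absorb the contributions coming from the reflection, so that only the controlled term governed by $\varphi \in A^Z(\mathbb D)$ remains; this in turn yields the required Zygmund estimate via the boundedness results already established in Part I.
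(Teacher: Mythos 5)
Your proposal is correct and follows essentially the same route as the paper: choose the representative $\mu_0$ via \cite[Lemma 6]{M3} so that $f_{\mu_0}$ is a real-analytic (bi-Lipschitz) diffeomorphism of $\mathbb D^*$, and then realize the local section $s_{\mu_0}$ by solving the Schwarzian differential equation on $\mathbb D$ and extending by the associated real-analytic bi-Lipschitz quasiconformal reflection, so that $f_\mu$ is a composition of real-analytic maps. Your closing remark correctly identifies that the only substantive issue is that this Ahlfors--Weill-type section lands holomorphically in ${\rm Bel}^Z(\mathbb D^*)$, which is exactly what Theorem \ref{previous}(1) from Part I supplies.
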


\smallskip  
Since $\widetilde T^Z$ and $\widetilde{\mathcal T}^Z$ are biholomorphically equivalent as stated in Theorem \ref{holofiber},  
Theorem \ref{bundle} concerning $\widetilde{\mathcal T}^Z$ follows from the following lemma  
concerning $\widetilde T^Z$.

\begin{lemma}  
The Bers fiber space $\widetilde T^Z$ is a real-analytic disk-bundle over  
$T^Z$ with projection $P$.  
\end{lemma}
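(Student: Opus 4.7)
The plan is to build real-analytic local trivializations of $P \colon \widetilde T^Z \to T^Z$ from the holomorphic sections of $\Phi$ and the regular representatives furnished by Proposition~\ref{real-analytic}. Since the fiber over any class $[\mu]$ is canonically $\{[\mu]\}\times f_\mu(\mathbb D^*)$, and $f_\mu(\mathbb D^*)$ is a quasidisk, each fiber is topologically a disk; the substance of the lemma is therefore not the fiber type but the joint regularity of the identification of fibers with a standard $\mathbb D^*$ as the base point moves.

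Fix $[\mu] \in T^Z$ and write $\varphi_0 = \alpha([\mu])$. Applying Proposition~\ref{real-analytic}, choose a representative $\mu_0$ and a neighborhood $V_{\varphi_0} \subset \alpha(T^Z)$ together with a holomorphic right inverse $s_{\mu_0} \colon V_{\varphi_0} \to {\rm Bel}^Z(\mathbb D^*)$ of $\Phi$, so that $f_{s_{\mu_0}(\varphi)}$ is a real-analytic diffeomorphism of $\mathbb D^*$ for every $\varphi \in V_{\varphi_0}$. Define
\[
\Omega_{\mu_0} \colon V_{\varphi_0} \times \mathbb D^* \to P^{-1}(V_{\varphi_0}), \qquad (\varphi, \zeta) \longmapsto \bigl([s_{\mu_0}(\varphi)],\, f_{s_{\mu_0}(\varphi)}(\zeta)\bigr).
\]
Bijectivity is routine: injectivity follows from the injectivity of $\alpha$ on $T^Z$ together with each $f_{s_{\mu_0}(\varphi)}$ being a homeomorphism; surjectivity onto $P^{-1}(V_{\varphi_0})$ uses the classical fact that Teich\-m\"ul\-ler equivalent Beltrami coefficients yield coinciding $f_\mu(\mathbb D^*)$, so the image of $\zeta \mapsto f_{s_{\mu_0}(\varphi)}(\zeta)$ sweeps out the entire fiber over $\alpha^{-1}(\varphi)$.

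The main obstacle is to upgrade $\Omega_{\mu_0}$ to a real-analytic diffeomorphism. The slice-wise regularity is clear: for fixed $\zeta$, holomorphy of $\varphi \mapsto f_{s_{\mu_0}(\varphi)}(\zeta)$ follows from the holomorphy of $s_{\mu_0}$ together with the holomorphic dependence of $f_\mu(\zeta)$ on $\mu \in {\rm Bel}(\mathbb D^*)$ in the Ahlfors--Bers theory; for fixed $\varphi$, real-analyticity of $\zeta \mapsto f_{s_{\mu_0}(\varphi)}(\zeta)$ is precisely Proposition~\ref{real-analytic}. To combine these into joint real-analyticity, I would invoke the explicit representation mentioned in the discussion preceding Proposition~\ref{real-analytic}: on $\mathbb D^*$, the map $f_{s_{\mu_0}(\varphi)}$ factors as a composition of $f_{\mu_0}$ with a real-analytic bi-Lipschitz quasiconformal reflection and with the normalized solution of the Schwarzian differential equation having inhomogeneity $\varphi$. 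Each factor is real-analytic in $\zeta$ and depends holomorphically, hence real-analytically, on $\varphi$, and their composition preserves joint real-analyticity; the same analysis applies to the inverse. Once $\Omega_{\mu_0}$ is established as a real-analytic diffeomorphism, the transition map between two such charts has the fiber-preserving form $(\varphi, \zeta) \mapsto \bigl(\varphi,\, f_{s_{\mu_0'}(\varphi)}^{-1} \circ f_{s_{\mu_0}(\varphi)}(\zeta)\bigr)$ and is real-analytic as a composition of real-analytic maps, endowing $\widetilde T^Z$ with the structure of a real-analytic disk-bundle over $T^Z$ with projection $P$.
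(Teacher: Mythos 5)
Your proposal is correct and follows essentially the same route as the paper: the trivialization $\Omega_{\mu_0}$ coincides with the paper's chart $L_{\varphi_0}(\varphi,\zeta)=(\alpha^{-1}(\varphi), f_{s_{\mu_0}(\varphi)}(\zeta))$, and both arguments rest on Proposition~\ref{real-analytic} together with the holomorphic dependence of $f_{s_{\mu_0}(\varphi)}$ and its inverse on $\varphi$. Your added discussion of upgrading slice-wise regularity to joint real-analyticity via the explicit composition structure is a reasonable elaboration of a point the paper treats more briefly.
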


\begin{proof}  
For each Teichm\"uller equivalence class in ${\rm Bel}^Z(\mathbb D^*)$,  
we choose $\mu_0$ and $\varphi_0$ as in Proposition \ref{real-analytic}, and  
define the map  
\[
L_{\varphi_0}:V_{\varphi_0} \times \mathbb D^* \to \widetilde T^Z,\qquad  
L_{\varphi_0}(\varphi,\zeta)=(\alpha^{-1}(\varphi), f_{s_{\mu_0}(\varphi)}(\zeta)),  
\]
which satisfies $\alpha \circ P \circ L_{\varphi_0}(\varphi,\zeta)=\varphi$ for every $\varphi \in V_{\varphi_0}$.  
This provides a local trivialization of $\widetilde T^Z$ over $\alpha^{-1}(V_{\varphi_0}) \cong V_{\varphi_0}$.  

To establish that $\widetilde T^Z$ is a real-analytic disk-bundle over $T^Z$,  
it suffices to show that $L_{\varphi_0}$ is a real-analytic diffeomorphism onto  
$(\alpha \circ P)^{-1}(V_{\varphi_0})$.  
Indeed, each $f_{s_{\mu_0}(\varphi)}$ is real-analytic by Proposition \ref{real-analytic},  
and both $f_{s_{\mu_0}(\varphi)}(\zeta)$ and $f_{s_{\mu_0}(\varphi)}^{-1}(w)$ depend holomorphically on $\varphi$  
due to the holomorphic dependence of the solution of the Beltrami equation.  
\end{proof}

\smallskip  
\begin{remark}  
For the Teich\-m\"ul\-ler space $T^\gamma$ of circle diffeomorphisms with $\gamma$-H\"older continuous derivatives  
(see \cite{M1} and \cite{TW2}),  
the same result holds for the fiber space $\widetilde{\mathcal T}^\gamma$ defined analogously.  
\end{remark}

\section{The little Zygmund smooth class}\label{II3}

In this section, we investigate the little class of 
Zygmund smoothness. For a diffeomorphism $f:\mathbb S \to \mathbb C$ onto its image whose derivative $f'$ satisfies the 
Zygmund continuity condition, we say that $f'$ satisfies the {\it little Zygmund condition} if
\[
|f'(e^{i(\theta+t)})-2f'(e^{i\theta})+f'(e^{i(\theta-t)})| =o(t) 
\]
uniformly as $t \to 0$.
For the space ${\rm Bel}^Z(\mathbb D^*)$ of Beltrami coefficients,  
its little subspace is defined as ${\rm Bel}^Z_0(\mathbb D^*)=L^Z_0(\mathbb D^*) \cap {\rm Bel}(\mathbb D^*)$, where 
\[
L^Z_0(\mathbb D^*)=\{ \mu \in L^Z(\mathbb D^*) \mid \lim_{t \to 0}\
\underset{\ 1<|z|<1+t}{\mathrm{ess}\sup}\,(|z|^2-1)^{-1}|\mu(z)|=0\}.
\]
The Teich\-m\"ul\-ler space of the little Zygmund smooth class is defined by
$T^Z_0=\pi({\rm Bel}^Z_0(\mathbb D^*))$.
We note that the notation $_0$ in this paper differs from that in \cite{M1} and \cite{TW2}.

We observe that $L^Z_0(\mathbb D^*)$ is a closed subspace of $L^Z(\mathbb D^*)$, and hence
${\rm Bel}^Z_0(\mathbb D^*)$ is closed in ${\rm Bel}^Z(\mathbb D^*)$.
Indeed, 
if a sequence $\{\mu_k\}_{k \in \mathbb{N}}$ in $L^Z_0(\mathbb D^*)$ and 
$\mu \in L^Z(\mathbb D^*)$ satisfy
$\Vert\mu_k-\mu\Vert_Z\to 0$ as $k\to\infty$, we can show that $\mu \in L^Z_0(\mathbb D^*)$ as follows.
For every $\varepsilon>0$, we choose some $k_0 \in\mathbb{N}$ 
such that $\Vert\mu_{k_0}-\mu\Vert_Z< \varepsilon$. Since $\mu_{k_0} \in L^Z_0(\mathbb D^*)$, 
there exists some $t_0>0$ such that
\[
\underset{\ 1<|z|<1+t_0}{\mathrm{ess}\sup}\,(|z|^2-1)^{-1}|\mu_{k_0}(z)|<\varepsilon.
\]
Then, it follows that
\[
\underset{\ 1<|z|<1+t_0}{\mathrm{ess}\sup}\,(|z|^2-1)^{-1}|\mu(z)| <
\varepsilon+ \Vert\mu_{k_0}-\mu\Vert_Z<2\varepsilon,
\]
which implies that $\mu \in L^Z_0(\mathbb D^*)$.

We claim that $(f^\mu)'$ satisfies the little Zygmund condition if and only if
$\mu$ belongs to ${\rm Bel}^Z_0(\mathbb D^*)$. This follows from the proof of \cite[Theorem 1.1]{TW1}
with a modification using the fact that $(f_\mu)'|_{\mathbb S}$ satisfies the little Zygmund condition 
if and only if 
\[
\lim_{t \to 0} \sup_{1-t<|z|<1}\, (1-|z|^2)|(f_\mu|_{\mathbb D})'''(z)|=0
\]
(see \cite[Theorem 13]{Zy}).

For the spaces $A^Z(\mathbb D)$ and $B^Z(\mathbb D)$ of holomorphic functions,
the little subspaces are similarly defined:
\begin{align*}
A^Z_0(\mathbb D)&=\{\varphi \in A^Z(\mathbb D) \mid \lim_{t \to 0}\sup_{1-t<|z|<1}\,(1-|z|^2)|\varphi(z)|=0\};\\
B^Z_0(\mathbb D)&=\{ \psi \in B^Z(\mathbb D) \mid \lim_{t \to 0}\sup_{1-t<|z|<1}\,(1-|z|^2)|\psi''(z)|=0\}.
\end{align*}
By the same reasoning as above for $L^Z_0(\mathbb D^*)$, we see that
$A^Z_0(\mathbb D)$ and $B^Z_0(\mathbb D)$ are closed subspaces of $A^Z(\mathbb D)$ and $B^Z(\mathbb D)$, respectively.

For the Schwarzian derivative map, these little subspaces correspond appropriately.

\begin{theorem}\label{Schwarzian}
The image of ${\rm Bel}^Z_0(\mathbb D^*)$ under
the Schwarzian derivative map $\Phi$ is contained in $A^Z_0(\mathbb D)$. 
Moreover, $\Phi:{\rm Bel}^Z_0(\mathbb D^*) \to A^Z_0(\mathbb D)$ is a holomorphic split submersion onto its image.
\end{theorem}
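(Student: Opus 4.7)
The plan is to handle the two assertions separately and to reuse the constructions of Part~I as much as possible.

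For the first assertion, I would start from the identity $S_f = f'''/f' - \tfrac{3}{2}(f''/f')^2$ applied to $f = f_\mu|_{\mathbb D}$ and show that both summands, multiplied by $(1-|z|^2)$, tend to $0$ as $|z| \to 1$. The characterization recalled just before the theorem gives, for $\mu \in \mathrm{Bel}^Z_0(\mathbb D^*)$, that $\lim_{t \to 0}\sup_{1-t<|z|<1}(1-|z|^2)|f'''(z)| = 0$. Since $\mu \in \mathrm{Bel}^Z(\mathbb D^*)$ makes $f'|_{\mathbb S}$ nowhere-vanishing Zygmund-continuous, $|f'|$ and $|f'|^{-1}$ are uniformly bounded on $\mathbb D$, so the first summand is controlled. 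For the second, $\psi = \log f' \in B^Z(\mathbb D)$ (by Part~I) yields $(1-|z|^2)|\psi''(z)| \le \|\psi\|_{B^Z}$; integrating this bound along a radius from $0$ gives $|\psi'(z)| = O(\log(1/(1-|z|^2)))$, hence $(1-|z|^2)|f''/f'|^2 = (1-|z|^2)|\psi'(z)|^2 \to 0$. Combining gives $(1-|z|^2)|S_f(z)| \to 0$ and thus $\Phi(\mu) \in A^Z_0(\mathbb D)$.

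For the second assertion, I would use the local holomorphic right inverses $s_{\mu_0}\colon V_{\varphi_0} \to \mathrm{Bel}^Z(\mathbb D^*)$ of $\Phi$ supplied by Theorem~\ref{previous}(1). Given $\varphi_0 \in \Phi(\mathrm{Bel}^Z_0(\mathbb D^*))$ and $\mu_0 \in \mathrm{Bel}^Z_0(\mathbb D^*)$ with $\Phi(\mu_0) = \varphi_0$, it suffices to show that $s_{\mu_0}(V_{\varphi_0} \cap A^Z_0(\mathbb D)) \subset \mathrm{Bel}^Z_0(\mathbb D^*)$. Explicitly, $s_{\mu_0}$ is constructed in Part~I from an Ahlfors--Weill-type reflection which, at the base point $(0,0)$, reduces to a formula of the form
\[
\sigma(\varphi)(\zeta) = -\tfrac{1}{2}(|\zeta|^2-1)^2\,\bar\zeta^{-4}\,\overline{\varphi(1/\bar\zeta)}, \qquad \zeta \in \mathbb D^*,
\]
giving the direct pointwise comparison $(|\zeta|^2-1)^{-1}|\sigma(\varphi)(\zeta)| \le (1-|z|^2)|\varphi(z)|$ with $z = 1/\bar\zeta$, so $\varphi \in A^Z_0(\mathbb D)$ forces $\sigma(\varphi) \in \mathrm{Bel}^Z_0(\mathbb D^*)$. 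For a general base point, $s_{\mu_0}$ is obtained by conjugating $\sigma$ with the real-analytic bi-Lipschitz quasiconformal reflection associated to $f_{\mu_0}$ from \cite[Lemma~6]{M3}.

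The main obstacle is verifying that this conjugation preserves the little subclass $L^Z_0 \subset L^Z$. This reduces to a quantitative change-of-variables estimate near $\mathbb S$: since $\mu_0 \in \mathrm{Bel}^Z_0(\mathbb D^*)$ and the reflection is bi-Lipschitz on $\overline{\mathbb D^*}$, the factor $|\zeta|^2-1$ is preserved up to multiplicative constants, so vanishing of $(|\zeta|^2-1)^{-1}|\nu(\zeta)|$ on approach to $\mathbb S$ is stable under the conjugation. Combined with the explicit Ahlfors--Weill estimate above, this yields $s_{\mu_0}(V_{\varphi_0} \cap A^Z_0(\mathbb D)) \subset \mathrm{Bel}^Z_0(\mathbb D^*)$, completing the proof that $\Phi \colon \mathrm{Bel}^Z_0(\mathbb D^*) \to A^Z_0(\mathbb D)$ is a holomorphic split submersion onto its image.
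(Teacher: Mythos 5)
Your proof of the first assertion takes a genuinely different route from the paper. The paper avoids all boundary asymptotics: it truncates $\mu_k=\mu 1_{\{|z|\ge 1+1/k\}}$, observes that $\Phi(\mu_k)\in A^Z_0(\mathbb D)$ because $f_{\mu_k}$ is conformal across $\mathbb S$, and concludes by continuity of $\Phi$ and closedness of $A^Z_0(\mathbb D)$. Your direct estimate of $S_f=f'''/f'-\tfrac32(f''/f')^2$ is also viable: the boundedness of $|f'|^{\pm1}$ follows from integrating the $B^Z$-bound on $\log f'$ twice, and $(1-|z|^2)\log^2\frac{1}{1-|z|^2}\to0$ handles the second term. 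But note that your first term rests on the implication $\mu\in{\rm Bel}^Z_0(\mathbb D^*)\Rightarrow\lim_{t\to0}\sup_{1-t<|z|<1}(1-|z|^2)|f'''(z)|=0$, which the paper itself only obtains by "modifying the proof of [TW1, Theorem 1.1]"; you are importing a nontrivial input that the truncation argument does not need. The trade-off: your argument is quantitative and self-contained modulo that characterization, while the paper's is softer but needs no information about $f_\mu$ near $\mathbb S$ beyond continuity of $\Phi$.

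For the second assertion your strategy coincides with the paper's, but two steps are missing. First, the real-analytic bi-Lipschitz representative supplied by \cite[Lemma 6]{M3} is a priori only Teichm\"uller equivalent to the given $\mu$ and lies in ${\rm Bel}^Z(\mathbb D^*)$; it must be checked that it can be taken in the little class ${\rm Bel}^Z_0(\mathbb D^*)$ (the paper states this requires examining that proof), since otherwise the base point of your section is not in ${\rm Bel}^Z_0(\mathbb D^*)$ and the perturbation estimate \eqref{quadbel} gives nothing about membership in the little class. Second, the section you build passes only through the special representative $\mu_0$; to conclude that $\Phi$ restricted to ${\rm Bel}^Z_0(\mathbb D^*)$ is a \emph{split} submersion you need a local holomorphic right inverse through an \emph{arbitrary} $\mu\in{\rm Bel}^Z_0(\mathbb D^*)$. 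The paper obtains this via $s_\mu=r_\mu^{-1}\circ r_{\mu_0}\circ s_{\mu_0}$, which requires knowing that the right translations are biholomorphic automorphisms compatible with the little class; this step does not appear in your proposal. Both gaps are fixable with the machinery of Part I, but as written the argument only establishes the submersion property along the image of one section over each fiber.
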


\begin{proof}
For every $\mu \in {\rm Bel}^Z_0(\mathbb D^*)$ and $k \in \mathbb N$, let $\mu_k=\mu 1_{\{|z| \geq 1+1/k\}}$.
Then, $\varphi_k=\Phi(\mu_k)$ belongs to $A^Z_0(\mathbb D)$. Indeed, since $f_{\mu_k}$ is conformal on $\{|z|<1+1/k\}$,
$\varphi_k$ is bounded in a neighborhood of $\mathbb S$.
Since $\mu_k \to \mu$ in ${\rm Bel}^Z(\mathbb D^*)$ as $k \to \infty$ and $\Phi$ is continuous, 
we obtain that $\varphi_k=\Phi(\mu_k)$ converges to $\varphi_0=\Phi(\mu)$ in $A^Z(\mathbb D)$. As $A^Z_0(\mathbb D)$ is closed,
$\varphi_0$ belongs to $A^Z_0(\mathbb D)$. 

To establish that $\Phi:{\rm Bel}^Z_0(\mathbb D^*) \to A^Z_0(\mathbb D)$ is a holomorphic split submersion onto its image,
we construct a local holomorphic right inverse $s_{\mu}:V_{\varphi_0} \to {\rm Bel}^Z_0(\mathbb D^*)$ of $\Phi$ defined on a neighborhood  
$V_{\varphi_0} \subset \Phi({\rm Bel}^Z(\mathbb D^*))$ of $\varphi_0=\Phi(\mu)$ 
for every $\mu \in {\rm Bel}^Z_0(\mathbb D^*)$.

We first take $\mu_0 \in {\rm Bel}^Z_0(\mathbb D^*)$ such that $\mu_0$ is Teich\-m\"ul\-ler equivalent to $\mu$ and
$f_{\mu_0}$ is a bi-Lipschitz real-analytic diffeomorphism in the hyperbolic metric 
on $\mathbb D^*$, as in Proposition \ref{real-analytic}. By examining the proof of \cite[Lemma 6]{M3}, we can choose such $\mu_0$ in 
the little space ${\rm Bel}^Z_0(\mathbb D^*)$. Then, as in the proof of \cite[Lemma 7.5]{M1},
we have
\begin{equation}\label{quadbel}
|s_{\mu_0}(\varphi)(z^*)-\mu_0(z^*)| \lesssim (1-|z|^2)^2|\varphi(z)-\varphi_0(z)|
\end{equation} 
for every $\varphi \in V_{\varphi_0}$ and every $z^*=1/\bar z \in \mathbb D^*$.
If $\varphi \in A^Z_0(\mathbb D)$ in addition, then $\varphi-\varphi_0 \in A^Z_0(\mathbb D)$, and
$\mu_0 \in {\rm Bel}^Z_0(\mathbb D^*)$ implies $s_{\mu_0}(\varphi) \in {\rm Bel}^Z_0(\mathbb D^*)$.

For an arbitrary $\mu \in {\rm Bel}^Z_0(\mathbb D^*)$,
we apply the right translations $r_\mu$ and $r_{\mu_0}$, which are biholomorphic automorphisms of  
${\rm Bel}^Z(\mathbb D^*)$ by \cite[Lemma 5]{M3}, and in fact of ${\rm Bel}^Z_0(\mathbb D^*)$
as we can see by the proof of \cite[Proposition 4]{M3}.
Then, $s_\mu=r_{\mu}^{-1} \circ r_{\mu_0} \circ s_{\mu_0}$ is a local holomorphic right inverse of $\Phi$ defined on 
$V_{\varphi_0} \cap A^Z_0(\mathbb D)$ 
satisfying $s_\mu(\varphi_0)=\mu$.
\end{proof}

\begin{corollary}\label{little}
The image of $T^Z_0$ under
the Bers embedding $\alpha$ is a domain of $A^Z_0(\mathbb D)$, given by  
\[
\alpha(T^Z_0)=\Phi({\rm Bel}^Z_0(\mathbb D^*))=A^Z_0(\mathbb D) \cap \Phi({\rm Bel}(\mathbb D^*)).
\]
Moreover, $\alpha:T^Z_0 \to \alpha(T^Z_0)$ is a homeomorphism.
The Teich\-m\"ul\-ler space $T^Z_0$ is endowed with a complex Banach manifold structure via $\alpha$, such that
$T^Z_0$ is a closed submanifold of $T^Z$.
\end{corollary}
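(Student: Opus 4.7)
The first identity $\alpha(T^Z_0)=\Phi({\rm Bel}^Z_0(\mathbb D^*))$ is immediate from the definition $T^Z_0=\pi({\rm Bel}^Z_0(\mathbb D^*))$ combined with $\alpha\circ\pi=\Phi$, and the inclusion $\Phi({\rm Bel}^Z_0(\mathbb D^*))\subset A^Z_0(\mathbb D)\cap\Phi({\rm Bel}(\mathbb D^*))$ is already part of Theorem \ref{Schwarzian}. The substance of the argument is therefore the reverse inclusion; once that is in hand, all remaining assertions follow formally from Theorems \ref{previous} and \ref{Schwarzian}.

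My plan for the reverse inclusion is to show that any $\mu\in{\rm Bel}^Z(\mathbb D^*)$ with $\Phi(\mu)\in A^Z_0(\mathbb D)$ must already lie in ${\rm Bel}^Z_0(\mathbb D^*)$. Given $\varphi\in A^Z_0(\mathbb D)\cap\Phi({\rm Bel}(\mathbb D^*))$, Theorem \ref{previous}(2) first furnishes $\mu\in{\rm Bel}^Z(\mathbb D^*)$ with $\Phi(\mu)=\varphi$. I then run the chain
\[
\Phi(\mu)\in A^Z_0\ \Longrightarrow\ \lim_{t\to0}\sup_{1-t<|z|<1}(1-|z|^2)|(f_\mu|_{\mathbb D})'''(z)|=0\ \Longleftrightarrow\ (f_\mu)'|_{\mathbb S}\text{ little Zygmund}\ \Longleftrightarrow\ \mu\in{\rm Bel}^Z_0,
\]
where the second equivalence is the characterization from \cite[Theorem 13]{Zy} recalled in Section \ref{II3} and the third is the claim proved just before Theorem \ref{Schwarzian}. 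The first implication is to be derived from the identity $(f_\mu|_{\mathbb D})'''/(f_\mu|_{\mathbb D})'=\Phi(\mu)+\tfrac{3}{2}\bigl((f_\mu|_{\mathbb D})''/(f_\mu|_{\mathbb D})'\bigr)^2$ by absorbing the quadratic pre-Schwarzian term using the Zygmund-class estimates available for $\mu\in{\rm Bel}^Z$.

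The rest is soft. The image $\alpha(T^Z_0)$ is open in $A^Z_0(\mathbb D)$ by the holomorphic split submersion of Theorem \ref{Schwarzian}, and connected as the continuous image of the connected domain ${\rm Bel}^Z_0(\mathbb D^*)$, so it is a domain. The map $\alpha:T^Z_0\to\alpha(T^Z_0)$ is a continuous bijection for the quotient topology, and the local holomorphic right inverses from Theorem \ref{Schwarzian} furnish a continuous inverse, so $\alpha$ is a homeomorphism; the complex Banach manifold structure on $T^Z_0$ is then transported from the domain $\alpha(T^Z_0)\subset A^Z_0(\mathbb D)$. Finally, by the identity just proved, $\alpha(T^Z_0)=A^Z_0(\mathbb D)\cap\alpha(T^Z)$, and since $A^Z_0(\mathbb D)$ is a closed linear subspace of $A^Z(\mathbb D)$ (noted at the start of Section \ref{II3}), $T^Z_0$ is a closed submanifold of $T^Z$. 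I expect the hard part to be the first implication in the display: since the pre-Schwarzian quotient $(f_\mu|_{\mathbb D})''/(f_\mu|_{\mathbb D})'$ satisfies only a Bloch-type bound $O(1/(1-|z|^2))$ for $\mu\in{\rm Bel}^Z$, a naive use of the identity does not yet give $o(1)$ decay of $(1-|z|^2)|(f_\mu|_{\mathbb D})'''|$, and one must exploit the coupling $S_f=(P_f)'-(P_f)^2/2$ together with $\Phi(\mu)\in A^Z_0$ to extract the improved decay of $P_f$ required to close the estimate.
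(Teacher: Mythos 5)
Your reduction to the reverse inclusion $A^Z_0(\mathbb D)\cap\Phi({\rm Bel}(\mathbb D^*))\subset\Phi({\rm Bel}^Z_0(\mathbb D^*))$ is the right framing, and the soft parts (openness and connectedness of the image, the homeomorphism via local right inverses, closedness of the submanifold from the closedness of $A^Z_0(\mathbb D)$ in $A^Z(\mathbb D)$) match what the paper does. The first implication in your chain is also fine and completable: writing $f'''/f'=S_f+\tfrac32 P_f^2$, the quadratic term is handled by the estimate $(1-|z|^2)^{1/3}|P_f(z)|\lesssim\Vert\log f'\Vert_{B^Z}$ (Zhu's Proposition 8, invoked in Lemma \ref{qlambda}), which gives $(1-|z|^2)|P_f|^2\lesssim(1-|z|^2)^{1/3}\to0$ with no need for any improved decay of $P_f$ itself.

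The genuine gap is the last step of your chain. You choose an arbitrary $\mu\in{\rm Bel}^Z(\mathbb D^*)$ with $\Phi(\mu)=\varphi$ and conclude $\mu\in{\rm Bel}^Z_0(\mathbb D^*)$ from the little Zygmund property of the boundary map. That implication is false at the level of individual Beltrami coefficients: for example, a quasiconformal self-map of $\mathbb D^*$ of the form $z\mapsto ze^{i\epsilon(|z|)}$ with $\epsilon(r)=(r-1)^2$ near $r=1$ (suitably cut off) is the identity on $\mathbb S$, so its boundary derivative is trivially little Zygmund and its Schwarzian is $0\in A^Z_0(\mathbb D)$, yet its dilatation satisfies $|\mu(z)|\asymp|z|^2-1$ and so lies in ${\rm Bel}^Z(\mathbb D^*)\setminus{\rm Bel}^Z_0(\mathbb D^*)$. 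The equivalence recalled before Theorem \ref{Schwarzian} can therefore only be used in the existential form "the class of $\mu$ admits a representative in ${\rm Bel}^Z_0(\mathbb D^*)$", and producing that representative is precisely the nontrivial content you have assumed away. The paper's proof constructs it explicitly: given $\varphi\in A^Z_0(\mathbb D)\cap\Phi({\rm Bel}(\mathbb D^*))$, the partial Ahlfors--Weill extension of Becker--Pommerenke \cite[Satz 4]{BP} yields, in a collar of $\mathbb S$, a Beltrami coefficient with $|\mu(z^*)|\lesssim(1-|z|^2)^2|\varphi(z)|=o(|z^*|^2-1)$, which is then patched with an arbitrary quasiconformal extension away from the boundary; this gives the required representative in ${\rm Bel}^Z_0(\mathbb D^*)$. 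Your argument needs to be replaced by (or supplemented with) this construction.
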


\begin{proof}
The claim that $\Phi({\rm Bel}^Z_0(\mathbb D^*))=A^Z_0(\mathbb D) \cap \Phi({\rm Bel}(\mathbb D^*))$ 
follows from the standard argument using the partial quasiconformal extension of the Ahlfors--Weill type 
to a neighborhood of $\mathbb S$, as given by Becker and Pommerenke \cite[Satz 4]{BP}.
See also \cite[Lemma 4.5]{M1} for its statement
and \cite[Theorem 2.8]{TW1} for its application.
The remaining claims follow directly from Theorem \ref{Schwarzian}.
\end{proof}

For the pre-Schwarzian derivative map, we can assert the similar results. We denote its image by
$\widetilde {\mathcal T}^Z_0=\Psi({\rm Bel}^Z_0(\mathbb D^*))$.

\begin{theorem}\label{hss}
The image of ${\rm Bel}^Z_0(\mathbb D^*)$ under the pre-Schwarzian derivative map $\Psi$ is 
a domain of $B^Z_0(\mathbb D)$, given by  
\[
\widetilde {\mathcal T}^Z_0=\Psi({\rm Bel}^Z_0(\mathbb D^*)) = B^Z_0(\mathbb D) \cap \widetilde {\mathcal T}.
\]
Moreover, both $\Psi:{\rm Bel}^Z_0(\mathbb D^*) \to \widetilde {\mathcal T}^Z_0$ and 
$\Lambda:\widetilde {\mathcal T}^Z_0 \to \alpha(T^Z_0)$ 
are holomorphic split submersions.
\end{theorem}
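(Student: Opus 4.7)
The plan is to mirror the proofs of Theorem \ref{Schwarzian} and Corollary \ref{little}, using the biholomorphic fiber identification of Theorem \ref{holofiber} together with the relation $\Lambda \circ \Psi = \Phi$. There are three pieces to establish: the sharp equality $\widetilde{\mathcal T}^Z_0 = B^Z_0(\mathbb D) \cap \widetilde{\mathcal T}$; the domain property; and the two holomorphic split submersion statements.

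For the inclusion $\Psi({\rm Bel}^Z_0(\mathbb D^*)) \subset B^Z_0(\mathbb D)$, I would use the truncation argument from Theorem \ref{Schwarzian}: for $\mu \in {\rm Bel}^Z_0$ set $\mu_k = \mu \cdot 1_{\{|z| \geq 1+1/k\}}$; then $f_{\mu_k}$ is conformal in a neighborhood of $\overline{\mathbb D}$, so $\psi_k = \Psi(\mu_k)$ has $\psi_k''$ bounded there, hence $\psi_k \in B^Z_0$, and continuity of $\Psi$ together with closedness of $B^Z_0$ forces $\Psi(\mu) \in B^Z_0$. For the reverse inclusion, the key analytic observation is that for any $\psi \in B^Z$, integrating the bound $|\psi''(z)| \leq \|\psi\|_{B^Z}/(1-|z|^2)$ along a radius produces $|\psi'(z)| = O(\log(1/(1-|z|)))$, whence $(1-|z|^2)|\psi'(z)|^2 \to 0$ as $|z| \to 1$. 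Because $\Lambda(\psi) = \psi'' - (\psi')^2/2$, this yields the equivalence $\psi \in B^Z_0 \Leftrightarrow \Lambda(\psi) \in A^Z_0$ on $B^Z$. Hence, if $\psi \in B^Z_0 \cap \widetilde{\mathcal T} = B^Z_0 \cap \widetilde{\mathcal T}^Z$, then $\Lambda(\psi) \in A^Z_0 \cap \Phi({\rm Bel}(\mathbb D^*)) = \Phi({\rm Bel}^Z_0(\mathbb D^*))$ by Corollary \ref{little}, furnishing some $\mu_0 \in {\rm Bel}^Z_0$ with $\Phi(\mu_0) = \Lambda(\psi)$. Theorem \ref{holofiber} then writes $\psi = E([\mu_0], w)$ for some $w \in f_{\mu_0}(\mathbb D^*)$, and applying the right translations of \cite[Lemma 5]{M3}, which preserve ${\rm Bel}^Z_0$ as noted before Corollary \ref{little}, produces $\nu \in {\rm Bel}^Z_0$ with $f_\nu|_{\mathbb D} = \Theta_w \circ f_{\mu_0}|_{\mathbb D}$ and therefore $\Psi(\nu) = \psi$. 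The domain property follows from Theorem \ref{holofiber} combined with Corollary \ref{little}.

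For the submersion claims, I would adapt the local section construction of Theorem \ref{Schwarzian}. Choosing $\mu_0 \in {\rm Bel}^Z_0$ with $f_{\mu_0}$ a bi-Lipschitz real-analytic diffeomorphism (Proposition \ref{real-analytic}, verified in the little class as in the proof of Theorem \ref{Schwarzian}), the local holomorphic right inverse $s_{\mu_0}$ of $\Phi$ already sends $V_{\varphi_0} \cap A^Z_0$ into ${\rm Bel}^Z_0$ by \eqref{quadbel}. Combining $s_{\mu_0}$ with the holomorphic fiber coordinate $w \mapsto E([\mu_0], w)$ of Theorem \ref{holofiber} and a right translation yields a local holomorphic right inverse of $\Psi : {\rm Bel}^Z_0 \to \widetilde{\mathcal T}^Z_0$; the composition $\varphi \mapsto \Psi(s_{\mu_0}(\varphi))$ then serves as a local right inverse of $\Lambda : \widetilde{\mathcal T}^Z_0 \to \alpha(T^Z_0)$.

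The main obstacle is the reverse inclusion $B^Z_0 \cap \widetilde{\mathcal T} \subset \widetilde{\mathcal T}^Z_0$: the naive qc extension $\Theta_w \circ f_{\mu_0}$ does not fix $\infty$, so matching the standard normalization of $f_\nu$ with the little-Zygmund property of its dilatation depends essentially on the preservation of ${\rm Bel}^Z_0$ under right translations, which is borrowed from Part I.
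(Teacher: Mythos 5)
Your overall architecture is sound and, for the submersion claims, essentially matches the paper: the paper derives both split submersions from the single identity $\Lambda\circ\Psi=\Phi$ together with the inclusion $\Lambda(B^Z_0(\mathbb D))\subset A^Z_0(\mathbb D)$ (its Lemma \ref{qlambda}) and the fact that $\Phi|_{{\rm Bel}^Z_0}$ is already a holomorphic split submersion; your explicit sections $\Psi\circ s_{\mu_0}$ for $\Lambda$ are the same mechanism made concrete. For the set equality $\widetilde{\mathcal T}^Z_0=B^Z_0(\mathbb D)\cap\widetilde{\mathcal T}$ you genuinely diverge: the paper disposes of it by saying it goes ``in the same way as'' Theorem \ref{Schwarzian} and Corollary \ref{little}, i.e.\ truncation for the forward inclusion and the Becker--Pommerenke partial Ahlfors--Weill extension for the reverse one, whereas you prove the reverse inclusion by descending to the Schwarzian level via $\Lambda$ and then climbing back up the fiber using Theorem \ref{holofiber}. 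Your key analytic input --- that $(1-|z|^2)|\psi'(z)|^2\to0$ for every $\psi\in B^Z(\mathbb D)$, so that $\Lambda(\psi)\in A^Z_0(\mathbb D)$ if and only if $\psi\in B^Z_0(\mathbb D)$ --- is correct (it is the same estimate that powers the paper's Lemma \ref{qlambda}) and this route buys a proof that reuses Corollary \ref{little} instead of redoing the partial-extension argument for the pre-Schwarzian derivative.

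There is, however, one step you attribute to the wrong tool. Having written $\psi=E([\mu_0],w)=\log(\Theta_w\circ f_{\mu_0}|_{\mathbb D})'$ with $\mu_0\in{\rm Bel}^Z_0(\mathbb D^*)$, you still must produce $\nu\in{\rm Bel}^Z_0(\mathbb D^*)$ with $\Psi(\nu)=\psi$, and the right translations of \cite[Lemma 5]{M3} do not do this: they act on the $\mathbb D^*$-side by changing the representative used as a base point for local sections of $\Phi$, and to apply $r_\nu^{-1}\circ r_{\mu_0}$ you would already need the $\nu$ you are trying to construct. What is actually required is a renormalization on the image side: precompose $\Theta_w\circ f_{\mu_0}$ with a quasiconformal self-map of $\mathbb D^*$ that is the identity in a neighborhood of $\mathbb S$ (and on $\overline{\mathbb D}$) and sends $\infty$ to $f_{\mu_0}^{-1}(w)$. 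The resulting map fixes $\infty$, restricts to $\Theta_w\circ f_{\mu_0}$ on $\mathbb D$, and its complex dilatation agrees with $\mu_0$ near $\mathbb S$ while being modified only on a compact subset of $\mathbb D^*$; since membership in ${\rm Bel}^Z_0(\mathbb D^*)$ is a boundary condition, this dilatation $\nu$ lies in ${\rm Bel}^Z_0(\mathbb D^*)$ and $\Psi(\nu)=\psi$. (The same renormalization is what makes your local section of $\Psi$, as opposed to that of $\Lambda$, land in ${\rm Bel}^Z_0(\mathbb D^*)$.) With that substitution your argument closes; as written, the citation does not support the step.
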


\begin{proof}
The first claim is established in the same way as for the Schwarzian derivative map $\Phi$, following 
the proofs of Theorem \ref{Schwarzian} and Corollary \ref{little}. 

For the latter claim, we consider the holomorphic map
$\Lambda:B^Z(\mathbb D) \to A^Z(\mathbb D)$, which satisfies $\Lambda \circ \Psi=\Phi$.
It follows from the proof of \cite[Lemma 12]{M3} that $\Lambda(B^Z_0(\mathbb D)) \subset A^Z_0(\mathbb D)$.
We prove more general claim for this inclusion later in Lemma \ref{qlambda}.
Since $\Phi$ is a holomorphic split submersion when restricted to ${\rm Bel}^Z_0(\mathbb D^*)$, 
it follows that both $\Psi$ and $\Lambda$ in the statement are also holomorphic split submersions.
\end{proof}

\smallskip
\begin{remark}
For the Teich\-m\"ul\-ler space $T^\gamma$ of circle diffeomorphisms with $\gamma$-H\"older continuous derivatives,
we can similarly define the little subspace $T^\gamma_0$ by requiring the vanishing of the norm on the boundary.
By the same arguments, the corresponding results hold for this setting as well.
\end{remark}

\section{The quotient Teich\-m\"ul\-ler space}\label{II4}

Following the theory of the asymptotic Teich\-m\"ul\-ler space $AT=T/T_0$ (see \cite{GS}), a similar discussion can be developed 
for the quotient Teich\-m\"ul\-ler space $T^Z/T^Z_0$. 
This quotient is defined by the equivalence relation $\approx$ in $T^Z$ and is equipped with the quotient topology. Here,
$\pi(\mu_1) \approx \pi(\mu_2)$ if $\mu_1 \sim_0 \mu_2$ in ${\rm Bel}^Z(\mathbb D^*)$, 
where $\mu_1 \sim_0 \mu_2$ means that there exists some $\nu \in {\rm Bel}^Z_0(\mathbb D^*)$ satisfying $f^\nu \circ f^{\mu_1}=f^{\mu_2}$.

\begin{lemma}\label{equivalence}
For $\mu_1, \mu_2 \in {\rm Bel}^Z(\mathbb D^*)$, the equivalence $\mu_1 \sim_0 \mu_2$ holds if and only if 
$\mu_1-\mu_2 \in L_0^Z(\mathbb D^*)$.
\end{lemma}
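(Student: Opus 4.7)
The plan is to apply the standard Beltrami coefficient chain rule to the composition $f^\nu \circ f^{\mu_1} = f^{\mu_2}$ and translate the little Zygmund condition between the two copies of $\mathbb D^*$ via the change of variable $w = f^{\mu_1}(z)$.

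For the forward direction, assume $\nu \in {\rm Bel}^Z_0(\mathbb D^*)$ is such a witness. The composition formula yields
\[
\nu(f^{\mu_1}(z)) \cdot \frac{\overline{(f^{\mu_1})_z(z)}}{(f^{\mu_1})_z(z)} = \frac{\mu_2(z) - \mu_1(z)}{1 - \mu_2(z)\overline{\mu_1(z)}},
\]
so taking absolute values gives $|\mu_2(z) - \mu_1(z)| = |1 - \mu_2(z)\overline{\mu_1(z)}| \cdot |\nu(f^{\mu_1}(z))|$. Since $\mu_1, \mu_2 \in L^Z(\mathbb D^*)$ implies $\mu_i(z) \to 0$ as $|z| \to 1^+$, the factor $|1 - \mu_2\overline{\mu_1}|$ lies between two positive constants in a neighborhood of $\mathbb S$. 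Then
\[
\frac{|\mu_2(z) - \mu_1(z)|}{|z|^2 - 1} \asymp \frac{|\nu(f^{\mu_1}(z))|}{|f^{\mu_1}(z)|^2 - 1} \cdot \frac{|f^{\mu_1}(z)|^2 - 1}{|z|^2 - 1}.
\]
Using that $f^{\mu_1}$ is bi-Lipschitz in the hyperbolic metric of $\mathbb D^*$ (a standard consequence of $\mu_1 \in L^Z(\mathbb D^*)$; cf.\ \cite[Proposition 6.8]{M1} and its consequences), the last ratio is pinched between positive constants near $\mathbb S$. Since $f^{\mu_1}(z) \to \mathbb S$ as $z \to \mathbb S$ and $\nu \in L^Z_0$, the middle factor tends to $0$, giving $\mu_1 - \mu_2 \in L^Z_0(\mathbb D^*)$.

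For the converse, suppose $\mu_1 - \mu_2 \in L^Z_0(\mathbb D^*)$, and define $\nu$ by the same formula
\[
\nu(w) = \frac{\mu_2(z) - \mu_1(z)}{1 - \mu_2(z)\overline{\mu_1(z)}} \cdot \frac{(f^{\mu_1})_z(z)}{\overline{(f^{\mu_1})_z(z)}}, \qquad w = f^{\mu_1}(z).
\]
Then $\nu \in L^\infty(\mathbb D^*)$ with $\Vert \nu \Vert_\infty < 1$, so it is a Beltrami coefficient satisfying $f^\nu \circ f^{\mu_1} = f^{\mu_2}$ by construction. Reading the previous chain of estimates in reverse shows $|\nu(w)|/(|w|^2-1) \to 0$ as $|w| \to 1^+$, hence $\nu \in L^Z_0(\mathbb D^*)$ and $\mu_1 \sim_0 \mu_2$.

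The main potential obstacle is the hyperbolic bi-Lipschitz comparison $(|f^{\mu_1}(z)|^2 - 1) \asymp (|z|^2 - 1)$ near $\mathbb S$ for $\mu_1 \in {\rm Bel}^Z(\mathbb D^*)$. This should follow directly from the uniform bounds on the derivatives of $f^{\mu_1}$ near $\mathbb S$ (the Zygmund boundary trace is bi-Lipschitz on $\mathbb S$, and the extension inherits this behavior). If the comparison is not extracted verbatim from an earlier cited result, a short auxiliary argument using these derivative bounds and the normalization of $f^{\mu_1}$ suffices; nothing beyond Part I's toolkit is required.
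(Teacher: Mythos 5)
Your proposal is correct and follows essentially the same route as the paper: the chain rule for complex dilatations under $f^\nu\circ f^{\mu_1}=f^{\mu_2}$, boundedness of the factor $1-\overline{\mu_1}\mu_2$ away from $0$, and the boundary distance comparison $|f^{\mu_1}(z)|-1\asymp|z|-1$, which the paper takes from \cite[Theorem 6.4]{M1} rather than re-deriving it. Your write-up merely supplies more detail than the paper's two-line argument.
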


\begin{proof}
Let $\nu \in {\rm Bel}^Z(\mathbb D^*)$ satisfy $f^\nu \circ f^{\mu_1} = f^{\mu_2}$.  
Then, $\nu$ is expressed as
\[
\nu(\zeta) = \frac{\mu_2(z) - \mu_1(z)}{1 - \overline{\mu_1(z)} \mu_2(z)} \frac{\partial f^{\mu_1}(z)}{\overline{\partial f^{\mu_1}(z)}}
\]
for $\zeta = f^{\mu_1}(z)$.  
Since $|\zeta| - 1 \asymp |z| - 1$ by \cite[Theorem 6.4]{M1}, it follows that  
$\nu \in {\rm Bel}^Z_0(\mathbb D^*)$ if and only if  
$\mu_1 - \mu_2 \in L_0^Z(\mathbb D^*)$.
\end{proof}


By this claim, we regard ${\rm Bel}^Z(\mathbb D^*)/\!\!\sim_0={\rm Bel}^Z(\mathbb D^*)/L_0^Z(\mathbb D^*)$ as a domain in 
the quotient Banach space $L^Z(\mathbb D^*)/L_0^Z(\mathbb D^*)$. Then, by the definition of
the quotient Teich\-m\"ul\-ler space $T^Z/T^Z_0$, the Teichm\"uller projection $\pi:{\rm Bel}^Z(\mathbb D^*) \to T^Z$
descends to the quotient Teichm\"uller projection $\hat \pi:{\rm Bel}^Z(\mathbb D^*)/\!\!\sim_0 \to T^Z/T^Z_0$, which is continuous.

Next, we consider the quotient of
the Schwarzian derivative map $\Phi:{\rm Bel}^Z(\mathbb D^*) \to A^Z(\mathbb D^*)$ satisfying
$\Phi({\rm Bel}^Z_0(\mathbb D^*)) \subset A^Z_0(\mathbb D^*)$ by Theorem \ref{Schwarzian}.

\begin{lemma}\label{welldef}
If $\mu_1 \sim_0 \mu_2$ for $\mu_1, \mu_2 \in {\rm Bel}^Z(\mathbb D^*)$, then $\Phi(\mu_1)-\Phi(\mu_2) \in A^Z_0(\mathbb D)$.
\end{lemma}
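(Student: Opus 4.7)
The plan is to reduce via Lemma \ref{equivalence} to the explicit condition $\mu_1 - \mu_2 \in L^Z_0(\mathbb D^*)$, approximate $\mu_1$ by coefficients that agree with $\mu_2$ in a one-sided neighborhood of $\mathbb S$, and conclude using the classical composition formula for Schwarzian derivatives. By Lemma \ref{equivalence}, the hypothesis $\mu_1 \sim_0 \mu_2$ is equivalent to $\eta := \mu_1 - \mu_2 \in L^Z_0(\mathbb D^*)$. For each $k \in \mathbb N$, set $\eta_k = \eta \cdot \mathbf 1_{\{|z| > 1 + 1/k\}}$ and $\mu_{1,k} = \mu_2 + \eta_k$. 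Each $\mu_{1,k}$ lies in ${\rm Bel}^Z(\mathbb D^*)$ since $|\mu_{1,k}| \leq \max(|\mu_1|,|\mu_2|)$ and $\|\mu_{1,k}\|_Z \leq \max(\|\mu_1\|_Z,\|\mu_2\|_Z)$, and $\|\mu_{1,k} - \mu_1\|_Z \to 0$ because $\eta \in L^Z_0$. Continuity of the holomorphic map $\Phi: {\rm Bel}^Z(\mathbb D^*) \to A^Z(\mathbb D)$ then gives $\Phi(\mu_{1,k}) \to \Phi(\mu_1)$ in $A^Z(\mathbb D)$, so by closedness of $A^Z_0(\mathbb D)$ it will suffice to show $\Phi(\mu_{1,k}) - \Phi(\mu_2) \in A^Z_0(\mathbb D)$ for each fixed $k$.

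For this key step, note that $\mu_{1,k} = \mu_2$ on $\{1 < |z| \leq 1 + 1/k\}$ and both coefficients vanish on $\mathbb D$. Hence the quasiconformal self-homeomorphism $h_k := f_{\mu_{1,k}} \circ f_{\mu_2}^{-1}$ of $\widehat{\mathbb C}$ has zero complex dilatation on the open set $f_{\mu_2}(\{|z| < 1 + 1/k\})$, which is a neighborhood of $\overline{f_{\mu_2}(\mathbb D)}$. Thus $h_k$ is conformal there and $S_{h_k}$ is holomorphic and bounded on $\overline{f_{\mu_2}(\mathbb D)}$. Applying the composition formula to $f_{\mu_{1,k}}|_{\mathbb D} = h_k \circ f_{\mu_2}|_{\mathbb D}$ yields
\[
\Phi(\mu_{1,k})(z) - \Phi(\mu_2)(z) = S_{h_k}(f_{\mu_2}(z)) \cdot (f_{\mu_2}'(z))^2, \qquad z \in \mathbb D.
\]
Since $\psi_2 = \log f_{\mu_2}'|_{\mathbb D}$ belongs to $B^Z(\mathbb D)$, double integration of the bound $(1-|z|^2)|\psi_2''(z)| \leq \|\psi_2\|_{B^Z}$ gives only logarithmic growth of $\psi_2'$ and hence boundedness of $\psi_2$ on $\mathbb D$, so $|f_{\mu_2}'|$ is bounded there. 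Therefore $(1-|z|^2)|\Phi(\mu_{1,k})(z) - \Phi(\mu_2)(z)| \lesssim (1-|z|^2) \to 0$ as $|z| \to 1$, placing $\Phi(\mu_{1,k}) - \Phi(\mu_2)$ in $A^Z_0(\mathbb D)$.

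The main technical point is ensuring the conformality of $h_k$ in a genuine \emph{open} neighborhood of $\overline{f_{\mu_2}(\mathbb D)}$, which relies on the strict inclusion $\overline{\mathbb D} \subset \{|z| < 1 + 1/k\}$ together with $f_{\mu_2}$ being a homeomorphism of $\widehat{\mathbb C}$; the remaining ingredients—closedness of $A^Z_0$, continuity of $\Phi$, the Schwarzian composition formula, and the $B^Z$-based bound on $f_{\mu_2}'$—are all standard.
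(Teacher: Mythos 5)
Your proof is correct, but it takes a genuinely different route from the paper. The paper reduces to $\nu=\mu_1-\mu_2\in L^Z_0(\mathbb D^*)$ via Lemma \ref{equivalence} just as you do, but then writes $\Phi(\mu_1)-\Phi(\mu_2)=\int_0^1 d_{\mu_2+t\nu}\Phi(\nu)\,dt$ and shows that each derivative $d_\mu\Phi(\nu)$ lies in the closed subspace $A^Z_0(\mathbb D)$, using the Takhtajan--Teo integral representation of $d_\mu\Phi(\nu)$ and splitting the kernel integral into a near-boundary part (controlled by the vanishing of $(|w|-\text{dist})^{-1}|\nu|$) and a far part (controlled by $\Vert\nu\Vert_\infty$). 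You instead truncate $\nu$ off a collar of $\mathbb S$, observe via the Schwarzian chain rule applied to $f_{\mu_{1,k}}=h_k\circ f_{\mu_2}$ that the truncated difference of Schwarzians is $O(1)$ on $\mathbb D$ (hence trivially in $A^Z_0(\mathbb D)$), and pass to the limit using continuity of $\Phi$ and closedness of $A^Z_0(\mathbb D)$; this is exactly the device the paper itself uses in Theorem \ref{Schwarzian} for the absolute statement $\Phi({\rm Bel}^Z_0)\subset A^Z_0$, transported to the relative setting. All the ingredients you invoke check out: $\mu_{1,k}$ agrees pointwise with either $\mu_1$ or $\mu_2$, so it stays in ${\rm Bel}^Z(\mathbb D^*)$; $\Vert\mu_{1,k}-\mu_1\Vert_Z\to0$ precisely because $\eta\in L^Z_0$; $h_k$ is conformal on the open set $f_{\mu_2}(\{|z|<1+1/k\})\supset \overline{f_{\mu_2}(\mathbb D)}$ by Weyl's lemma, so $S_{h_k}$ is bounded on that compact set; and $|f_{\mu_2}'|$ is bounded on $\mathbb D$ (this is \cite[Proposition 6.8]{M1}, which the paper also uses). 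Your argument is more elementary, avoiding the variational formula entirely. The one thing the paper's approach buys that yours does not is the statement that $d_\mu\Phi(\nu)\in A^Z_0(\mathbb D)$ for $\nu\in L^Z_0(\mathbb D^*)$, which is reused verbatim in the proof of Theorem \ref{qBers2} to show that $\widehat\Phi$ is holomorphic; with your proof of the lemma, that derivative computation would still have to be carried out separately there.
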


\begin{proof}
By Lemma \ref{equivalence}, if $\mu_1 \sim_0 \mu_2$, then there exists $\nu \in L_0^Z(\mathbb D^*)$ such that
$\mu_1-\mu_2=\nu$. We prove that the derivative $d_{\mu_2+t\nu} \Phi(\nu)$ of $\Phi$ at $\mu_2+t\nu$ for any 
$t \in [0,1]$ in the direction of $\nu$ belongs to $A_0^Z(\mathbb D)$. Then, by
\[
\Phi(\mu_1)-\Phi(\mu_2)=\int_0^1 d_{\mu_2+t\nu} \Phi(\nu) dt,
\]
we obtain $\Phi(\mu_1)-\Phi(\mu_2) \in A^Z_0(\mathbb D)$.

For any $\mu \in {\rm Bel}^Z(\mathbb D^*)$,
let $g:\mathbb D^* \to \Omega^*=f_\mu(\mathbb D^*)$ 
be the welding conformal homeomorphism satisfying that $f^\mu=g^{-1} \circ f_\mu$ on $\mathbb S$.
For any $\nu \in L_0^Z(\mathbb D^*)$, 
the integral representation of the derivative $d_\mu \Phi(\nu)$ (see \cite[p.25]{TT}) is given by
\[
(f_{\mu})_*(d_\mu \Phi(\nu))(\zeta)
=-\frac{6}{\pi} \int_{\Omega^*} \frac{g_*(\nu)(w)}{(w-\zeta)^4}dudv 
\quad (w=u+iv \in \Omega^*,\ \zeta \in \Omega=f_\mu(\mathbb D)),
\]
where 
\begin{align*}
(f_{\mu})_*(d_\mu \Phi(\nu))(\zeta)&=d_\mu \Phi(\nu)(f_{\mu}^{-1}(\zeta))(f_{\mu}^{-1})'(\zeta)^2;\\ 
g_*(\nu)(w)&=\nu(g^{-1}(w))\overline{(g^{-1})'(w)}(g^{-1})'(w)^{-1}. 
\end{align*}
We note that 
$|(f_{\mu}^{-1})'(\zeta)|$ is bounded away from $0$ by \cite[Proposition 6.8]{M1}. 
In order to show that $d_\mu \Phi(\nu) \in A_0^Z(\mathbb D)$,
we consider
\begin{align}\label{AZ0}
(1-|z|^2)|d_\mu \Phi(\nu)(z)|&\lesssim d(\zeta,\partial \Omega)|(f_{\mu})_*(d_\mu \Phi(\nu))(\zeta)|\\
&\lesssim d(\zeta,\partial \Omega) \int_{\Omega^*} \frac{|\nu(g^{-1}(w))|}{|w-\zeta|^4}dudv 
\end{align}
for $z =f_\mu^{-1}(\zeta) \in \mathbb D$, where $d(\zeta,\partial \Omega)$ is 
the Euclidean distance from $\zeta \in \Omega$ to the boundary.

Let $d(w,\partial \Omega^*)$ denote the Euclidean distance from
$w \in \Omega^*$ to the boundary, and let $\varepsilon(w)=d(w,\partial \Omega^*)^{-1}|\nu(g^{-1}(w))|$. 
We see that $|(g^{-1})'(w)|$ is also bounded
because $g$ is quasiconformally extendable to $\mathbb D$ whose complex dilatation is $(\mu^*)^{-1} \in {\rm Bel}^Z(\mathbb D)$
(see \cite[Proposition 4]{M3}),
where the Beltrami coefficient $\mu^*$ is the reflection of $\mu$ across $\mathbb S$, and $(\mu^*)^{-1}$ denotes the complex dilatation of
the inverse mapping of the normalized quasiconformal self-homeomorphism of $\mathbb D$ determined by $\mu^*$.
Then, since $\nu \in L^Z_0(\mathbb D^*)$, we have $\varepsilon(w) \to 0$ uniformly as
$d(w,\partial \Omega^*) \to 0$. 
We choose $\delta_0>0$ for every $\varepsilon_0>0$ such that if $d(w,\partial \Omega^*) \leq \delta_0$,
then $\varepsilon(w) \leq \varepsilon_0$.  
We note that if $|w-\zeta| \leq d(\zeta,\partial\Omega)+\delta_0$, then $\varepsilon(w) \leq \varepsilon_0$. 

By applying the standard estimate to the integral in \eqref{AZ0}, we obtain
\begin{align}
&\quad \int_{\Omega^*} \frac{|\nu(g^{-1}(w))|}{|w-\zeta|^4}dudv\\
&\leq \int_{d(\zeta,\partial\Omega) \leq |w-\zeta|<d(\zeta,\partial\Omega)+\delta_0} \frac{|\nu(g^{-1}(w))|}{|w-\zeta|^4}dudv
+\int_{d(\zeta,\partial\Omega)+\delta_0 \leq |w-\zeta|} \frac{|\nu(g^{-1}(w))|}{|w-\zeta|^4}dudv\\
&\leq \int_{d(\zeta,\partial\Omega) \leq |w-\zeta|<d(\zeta,\partial\Omega)+\delta_0} \frac{\varepsilon_0}{|w-\zeta|^3}dudv
+\int_{{d(\zeta,\partial\Omega)+\delta_0 \leq |w-\zeta|}} \frac{\Vert \nu \Vert_\infty}{|w-\zeta|^4}dudv\\
&\leq \frac{2\pi \varepsilon_0}{d(\zeta,\partial\Omega)}+\frac{\pi \Vert \nu \Vert_\infty}{(d(\zeta,\partial\Omega)+\delta_0)^2}.\label{divide}
\end{align}
Then, taking the limit as $d(\zeta,\partial\Omega) \to 0$ in \eqref{AZ0} together with 
\eqref{divide}, which follows uniformly from $|z| \to 1$, we conclude that
\[
\lim_{|z| \to 1}(1-|z|^2)|d_\mu \Phi(\nu)(z)| \lesssim \lim_{d(\zeta,\partial\Omega) \to 0}
\left(2\pi \varepsilon_0 +\frac{\pi \Vert \nu \Vert_\infty d(\zeta,\partial\Omega)}{(d(\zeta,\partial\Omega)+\delta_0)^2}\right)
=2\pi \varepsilon_0.
\]
Since $\varepsilon_0>0$ can be chosen arbitrarily small, 
we see that this limit is $0$, which implies that $d_\mu \Phi(\nu) \in A^Z_0(\mathbb D)$.
\end{proof}

Lemma \ref{welldef} ensures that $\Phi:{\rm Bel}^Z(\mathbb D^*) \to A^Z(\mathbb D)$ descends to a well-defined map
\[
\widehat \Phi:{\rm Bel}^Z(\mathbb D^*)/\!\!\sim_0={\rm Bel}^Z(\mathbb D^*)/L_0^Z(\mathbb D^*) \to A^Z(\mathbb D^*)/A^Z_0(\mathbb D^*)
\]
which we call the {\it quotient Schwarzian derivative map}.
Then, we obtain the following fundamental property.

\begin{theorem}\label{qBers2}
The quotient Schwarzian derivative map $\widehat \Phi$ is a holomorphic split submersion onto its image $\alpha(T^Z)/A^Z_0(\mathbb D^*)$.
\end{theorem}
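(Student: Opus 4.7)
The strategy is to descend the Bers section $s_{\mu_0}: V_{\varphi_0} \to {\rm Bel}^Z(\mathbb D^*)$ provided by Theorem \ref{previous}(1) to a local holomorphic right inverse of $\widehat \Phi$ near every point $[\varphi_0]$ of the image $\alpha(T^Z)/A^Z_0(\mathbb D)$.

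First I would verify that $\widehat \Phi$ is itself holomorphic. The composition $q_A \circ \Phi: {\rm Bel}^Z(\mathbb D^*) \to A^Z(\mathbb D)/A^Z_0(\mathbb D)$, where $q_A$ denotes the quotient projection, is holomorphic as a composition of the holomorphic map $\Phi$ with a bounded linear map. By Lemma \ref{welldef} it factors through the quotient $q_L: {\rm Bel}^Z(\mathbb D^*) \to {\rm Bel}^Z(\mathbb D^*)/L^Z_0(\mathbb D^*)$, and since $q_L$ is an open linear surjection between Banach spaces, the descent $\widehat \Phi$ is holomorphic with respect to the quotient Banach structures. The identification of the image with $\alpha(T^Z)/A^Z_0(\mathbb D)$ is immediate from $\Phi({\rm Bel}^Z(\mathbb D^*))=\alpha(T^Z)$ together with the openness of $q_A$.

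For the split submersion property, I would fix $[\varphi_0]$ in the image, choose a representative $\varphi_0 \in \alpha(T^Z)$, and take a real-analytic $\mu_0 \in {\rm Bel}^Z(\mathbb D^*)$ with $\Phi(\mu_0)=\varphi_0$ together with the local holomorphic section $s_{\mu_0}: V_{\varphi_0} \to {\rm Bel}^Z(\mathbb D^*)$ furnished by Proposition \ref{real-analytic}. I would then define
\[
\hat s_{\mu_0}: q_A(V_{\varphi_0}) \to {\rm Bel}^Z(\mathbb D^*)/L^Z_0(\mathbb D^*), \qquad [\varphi] \mapsto [s_{\mu_0}(\varphi)]
\]
on the open neighborhood $q_A(V_{\varphi_0})$ of $[\varphi_0]$. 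The crucial step is well-definedness: for $\varphi, \varphi' \in V_{\varphi_0}$ with $\varphi - \varphi' \in A^Z_0(\mathbb D)$ one must show that $s_{\mu_0}(\varphi) - s_{\mu_0}(\varphi') \in L^Z_0(\mathbb D^*)$. My plan is to generalize the pointwise estimate \eqref{quadbel} to
\[
|s_{\mu_0}(\varphi)(z^*) - s_{\mu_0}(\varphi')(z^*)| \lesssim (1-|z|^2)^2 |\varphi(z) - \varphi'(z)|, \qquad z^* = 1/\bar z,
\]
by revisiting the Becker--Pommerenke quasiconformal reflection used in the proof of \cite[Lemma 7.5]{M1}; the formula defining $s_{\mu_0}$ near $\mathbb S$ is essentially local in $z$, so the bound transports from the base pair $(\varphi_0,\mu_0)$ to an arbitrary pair inside $V_{\varphi_0}$. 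Combined with $(|z^*|^2-1)^{-1}\asymp (1-|z|^2)^{-1}$ near $\mathbb S$ and the defining vanishing property of $A^Z_0(\mathbb D)$, this yields $s_{\mu_0}(\varphi) - s_{\mu_0}(\varphi') \in L^Z_0(\mathbb D^*)$, so $\hat s_{\mu_0}$ is well defined.

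Once well-definedness is secured, holomorphy of $\hat s_{\mu_0}$ follows from that of $s_{\mu_0}$ by the same quotient principle used for $\widehat \Phi$, and the identity $\widehat \Phi \circ \hat s_{\mu_0} = \mathrm{id}$ on $q_A(V_{\varphi_0})$ is inherited from $\Phi \circ s_{\mu_0} = \mathrm{id}|_{V_{\varphi_0}}$. The main obstacle is the pointwise estimate in the well-definedness step. Should the direct route through \cite[Lemma 7.5]{M1} prove delicate, the fallback is to use the integrated differential
\[
s_{\mu_0}(\varphi) - s_{\mu_0}(\varphi') = \int_0^1 d_{\varphi' + t(\varphi - \varphi')}\, s_{\mu_0}(\varphi - \varphi')\, dt,
\]
and to show that $d_\psi s_{\mu_0}$ carries $A^Z_0(\mathbb D)$ into $L^Z_0(\mathbb D^*)$ at every $\psi \in V_{\varphi_0}$. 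At $\psi = \varphi_0$ this is immediate from the (essentially linear) Ahlfors--Weill form of the infinitesimal section, and at other base points it can be transported via the right translations $r_\mu$, which preserve $L^Z_0$-equivalence by \cite[Proposition 4]{M3} (as already exploited in the proof of Theorem \ref{Schwarzian}). The closedness of $L^Z_0(\mathbb D^*)$ in $L^Z(\mathbb D^*)$ then ensures that the integral remains in $L^Z_0(\mathbb D^*)$, completing the argument.
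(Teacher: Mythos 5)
Your proposal is correct and follows essentially the same route as the paper: holomorphy of $\widehat\Phi$ by descent through the quotient projections (the paper phrases this via the descent of $d\Phi$, using Lemma \ref{welldef}), and the split submersion property by pushing the local holomorphic right inverse $s_{\mu}$ of $\Phi$ down to a local right inverse $\hat s_{\mu}$ of $\widehat\Phi$, with well-definedness resting on the coset-preserving property derived from the estimate \eqref{quadbel}. Your treatment of the well-definedness step is in fact slightly more careful than the paper's, which only states the coset claim at the base point $\varphi=\Phi(\mu)$ rather than for arbitrary pairs in $V_\varphi$, but the underlying mechanism is identical.
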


\begin{proof}
To see that $\widehat \Phi$ is holomorphic,
we have to check that its derivative $d\Phi$ descends to the quotient spaces.
As shown in the proof of Lemma \ref{welldef}, if $\nu \in L^Z_0(\mathbb D^*)$, 
then $d_\mu \Phi(\nu) \in A^Z_0(\mathbb D)$. Since $A^Z_0(\mathbb D)$ is a closed subspace of $A^Z(\mathbb D)$, 
the derivative $d\Phi$ composed with the quotient projection $L^Z(\mathbb D^*) \to L^Z(\mathbb D^*)/L^Z_0(\mathbb D^*)$
factors into the derivative of $\widehat \Phi$. 
Thus, $\widehat \Phi$ is holomorphic.

To see that $\widehat \Phi$ is a split submersion,
we analyze the holomorphic split submersion $\Phi:{\rm Bel}^Z(\mathbb D^*) \to A^Z(\mathbb{D})$ onto its image more closely by considering the local holomorphic right inverse  
\[
s_{\mu}:V_\varphi \to {\rm Bel}^Z(\mathbb D^*)
\]
of $\Phi$, which is defined on a neighborhood $V_\varphi \subset \Phi({\rm Bel}^Z(\mathbb D^*))$ of $\varphi=\Phi(\mu)$ for every $\mu \in {\rm Bel}^Z(\mathbb D^*)$. In the proof of Theorem \ref{Schwarzian}, we considered the case where $\mu \in {\rm Bel}^Z_0(\mathbb D^*)$ and showed that the image of $V_\varphi \cap A^Z_0(\mathbb{D})$ under $s_{\mu}$ is contained in ${\rm Bel}^Z_0(\mathbb D^*)$. In fact, this result can be generalized to the claim that for any $\mu \in {\rm Bel}^Z(\mathbb D^*)$, the image of $V_\varphi \cap (\varphi+A^Z_0(\mathbb{D}))$ under $s_{\mu}$ is contained in $\mu+L^Z_0(\mathbb D^*)$. 

Thus, the quotient map  
\[
\hat s_{\mu}:V_\varphi/A^Z_0(\mathbb{D}) \to {\rm Bel}^Z(\mathbb D^*)/\!\!\sim_0
\]
is induced from $s_{\mu}$, which turns out to be a local holomorphic right inverse of $\widehat \Phi$. This implies that $\widehat \Phi$ is a holomorphic split submersion onto its image.
\end{proof}


Moreover, the {\it quotient Bers embedding}  
\[
\hat \alpha:T^Z/T_0^Z \to A^Z(\mathbb D)/A^Z_0(\mathbb D)
\]
is well-defined by $\hat \alpha=\widehat \Phi \circ \hat \pi^{-1}$. This is
is a local homeomorphism onto its image because 
$\hat \pi$ is a quotient projection and $\widehat \Phi$ has a continuous local right inverse. 
Thus, $T^Z/T^Z_0$ is a Banach manifold modeled on $A^Z(\mathbb{D})/A^Z_0(\mathbb{D})$ as stated in Theorem \ref{qBers1}.



In fact, we can prove that $\hat \alpha$ is injective using the same method as in \cite{M0}.

\begin{theorem}\label{injective}
The quotient Bers embedding $\hat{\alpha}: T^Z/T^Z_0 \to A^Z(\mathbb{D})/A^Z_0(\mathbb{D})$ is a homeomorphism onto 
its image $\alpha(T^Z)/A^Z_0(\mathbb D^*)$.
\end{theorem}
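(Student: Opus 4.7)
The plan is as follows. Since Theorem \ref{qBers1} already establishes that $\hat\alpha$ is a local homeomorphism onto its image, once injectivity is verified, local continuous inverses will glue to a global continuous inverse, upgrading $\hat\alpha$ to a homeomorphism onto $\alpha(T^Z)/A^Z_0(\mathbb D^*)$. Thus the entire task reduces to showing that if $\varphi_i:=\alpha([\mu_i])$ ($i=1,2$) satisfy $\varphi_1-\varphi_2\in A^Z_0(\mathbb D)$, then $[\mu_1]\approx[\mu_2]$; equivalently, by Lemma \ref{equivalence}, I need to exhibit representatives $\tilde\mu_i\in[\mu_i]\cap{\rm Bel}^Z(\mathbb D^*)$ with $\tilde\mu_1-\tilde\mu_2\in L^Z_0(\mathbb D^*)$.

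Following the strategy indicated by \cite{M0}, I would choose each $\tilde\mu_i$ to be the Becker--Pommerenke partial Ahlfors--Weill reflection of $\varphi_i$ on a common annular neighborhood $U=\{1<|z|<1+\delta\}$ of $\mathbb S$. Because $\varphi_i\in A^Z(\mathbb D)$ gives $(1-|z|^2)^2|\varphi_i(z)|\le(1-|z|^2)\Vert\varphi_i\Vert_{A^Z}\to 0$ as $|z|\to 1$, the Ahlfors--Weill construction (\cite[Satz 4]{BP}, cf.\ \cite[Lemma 4.5]{M1}, already invoked in Corollary \ref{little}) produces a quasiconformal extension of $f_{\mu_i}|_{\mathbb D}$ across $\mathbb S$ into a one-sided neighborhood whose Beltrami coefficient at $z^*=1/\bar z$ is exactly $-\tfrac12(1-|z|^2)^2\varphi_i(z)$; a common radius $\delta>0$ can be taken for $i=1,2$.

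Next, I would extend this partial reflection to all of $\mathbb D^*$ by any standard quasiconformal gluing on the compact shell $\mathbb D^*\setminus U$, producing $\tilde\mu_i\in[\mu_i]\cap{\rm Bel}^Z(\mathbb D^*)$; the membership in $L^Z$ on the shell is automatic because the weight $(|z|^2-1)^{-1}$ is bounded there. Using $|z^*|^2-1=(1-|z|^2)/|z|^2$, on $U$ I compute
\[
(|z^*|^2-1)^{-1}|\tilde\mu_1(z^*)-\tilde\mu_2(z^*)|=\tfrac{|z|^2}{2}(1-|z|^2)\,|\varphi_1(z)-\varphi_2(z)|,
\]
which tends to $0$ uniformly as $|z|\to 1$ precisely because $\varphi_1-\varphi_2\in A^Z_0(\mathbb D)$. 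Combined with the automatic $L^Z_0$-triviality outside $U$ (where the weight is bounded away from infinity), this yields $\tilde\mu_1-\tilde\mu_2\in L^Z_0(\mathbb D^*)$, whence $\tilde\mu_1\sim_0\tilde\mu_2$ by Lemma \ref{equivalence}, as desired.

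The main obstacle I anticipate is verifying rigorously that the extension beyond $U$ both lands in the same Teich\-m\"ul\-ler class $[\mu_i]$ in $T^Z$ and keeps the Beltrami coefficient in $L^Z(\mathbb D^*)$; this is controlled by the uniformity of the common radius $\delta$ in the Becker--Pommerenke formula and by the degeneration of the Zygmund weight away from $\mathbb S$, but it is the one place where the argument leaves the local world of the right-inverse $s_{\mu_i}$ and demands an explicit global construction of a representative within each Teich\-m\"ul\-ler class.
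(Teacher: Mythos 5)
Your argument is correct in substance, but it takes a genuinely different route from the paper. The paper follows the scheme of \cite{M0}: given $\Phi(\mu)-\Phi(\nu)\in A^Z_0(\mathbb D)$, one factorizes $f^\mu=f^{\mu_0}\circ f^{\mu_1}\circ f^\nu$ with $\mu_0$ compactly supported and $\mu_1$ produced by the local right inverse $s_\nu$ satisfying the quadratic estimate \eqref{quadbel}, and then shows $\mu_1\in{\rm Bel}^Z_0(\mathbb D^*)$ by the arguments of \cite[Theorem 5.3]{M0} together with Lemma \ref{welldef}. You instead exploit the fact that $\varphi\in A^Z(\mathbb D)$ forces $(1-|z|^2)^2|\varphi(z)|\to 0$, i.e.\ $T^Z\subset T_0$, so that \emph{every} class in $T^Z$ admits a Becker--Pommerenke/Ahlfors--Weill representative whose Beltrami coefficient near $\mathbb S$ depends linearly on the Schwarzian; the difference of two such representatives is then read off directly and Lemma \ref{equivalence} finishes. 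This is more elementary and self-contained, but it is special to Teichm\"uller spaces sitting inside $T_0$, which is presumably why the paper prefers the \cite{M0} machinery: that method also works where no Ahlfors--Weill representative exists (e.g.\ for $AT=T/T_0$ itself). Three points deserve explicit mention in your write-up. First, the exact Ahlfors--Weill dilatation at $z^*=1/\bar z$ is $-\tfrac12(1-|z|^2)^2\varphi(z)\,z^4/|z|^4$ (a unimodular factor is missing from your formula); it is common to $i=1,2$, so your displayed identity holds for the moduli and the argument survives. Second, your proof establishes $\tilde\mu_1\sim_0\tilde\mu_2$ for the \emph{chosen} representatives rather than for the given $\mu_1,\mu_2$; this suffices because $\approx$ on $T^Z$ is determined by boundary values (equivalently by the existence of representatives whose difference lies in $L^Z_0(\mathbb D^*)$), but the relation $\sim_0$ read literally as an equation on $\mathbb D^*$ is not representative-independent, so a sentence justifying this reduction is needed. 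Third, the gluing step you flag as the main obstacle is in fact harmless: any quasiconformal completion of the partial extension that fixes $\infty$ stays in the Teichm\"uller class of $\mu_i$, because the class is determined by $f_{\mu_i}|_{\mathbb D}$ alone (i.e.\ by the Schwarzian), and membership in ${\rm Bel}^Z(\mathbb D^*)$ imposes no constraint away from $\mathbb S$ beyond $\Vert\cdot\Vert_\infty<1$.
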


\begin{proof}
Since $\hat{\alpha}$ is a local homeomorphism onto its image by Corollary \ref{qBers1}, it suffices to show that $\hat{\alpha}$ is injective. Suppose that $\alpha(\pi(\mu))-\alpha(\pi(\nu))=\Phi(\mu)-\Phi(\nu)$ belongs to $A^Z_0(\mathbb{D})$ for $\pi(\mu), \pi(\nu) \in T^Z$. Under this assumption, we prove that $\pi(\mu) \approx \pi(\nu)$, that is, $\mu \sim_0 \nu$ in ${\rm Bel}^Z(\mathbb D^*)$. 

By the method in \cite[Proposition 3.3]{M0}, we can find $\mu_0, \mu_1 \in {\rm Bel}(\mathbb D^*)$ such that
\[
f^\mu=f^{\mu_0} \circ f^{\mu_1} \circ f^\nu,
\]
where $\mu_0$ has compact support in $\mathbb D^*$ and $\mu_1$ is obtained using the local right inverse $s_\nu$ of the Schwarzian derivative map $\Phi$ with a property similar to \eqref{quadbel}. Instead of the barycentric extension used in \cite{M0}, the argument in the proof of Theorem \ref{Schwarzian} ensures the existence of such a map $s_\nu$. Then, using similar arguments as in \cite[Theorem 5.3]{M0}, we can show that $\mu_1 \in {\rm Bel}^Z_0(\mathbb D^*)$. We remark that Lemma \ref{welldef} is applied at appropriate points. Since $\mu_0$ has compact support, the complex dilatation of $f^{\mu_0} \circ f^{\mu_1}$ belongs to ${\rm Bel}^Z_0(\mathbb D^*)$. Thus, we conclude that $\mu \sim_0 \nu$.
\end{proof}

\begin{remark}
For the Teich\-m\"ul\-ler space $T^\gamma$ of circle diffeomorphisms with $\gamma$-H\"older continuous derivatives, the quotient Teich\-m\"ul\-ler space $T^\gamma/T^\gamma_0$ can also be embedded into the quotient Banach space via the quotient Bers embedding. The proof follows the same approach.
\end{remark}

\section{The quotient pre-Bers embedding}

Finally, we consider the quotient of the holomorphic split submersion $\Lambda:\widetilde{\mathcal T}^Z \to \alpha(T^Z)$.

\begin{lemma}\label{qlambda}
If $\psi_1-\psi_2 \in B^Z_0(\mathbb D)$ for $\psi_1, \psi_2 \in B^Z(\mathbb D)$, then
$\Lambda(\psi_1)-\Lambda(\psi_2) \in A^Z_0(\mathbb D)$.
\end{lemma}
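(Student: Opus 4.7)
The plan is to compute the difference explicitly from $\Lambda(\psi) = \psi'' - \tfrac{1}{2}(\psi')^2$. Setting $\eta = \psi_1 - \psi_2 \in B^Z_0(\mathbb D)$, I would get
\[
\Lambda(\psi_1)-\Lambda(\psi_2) = \eta'' - \tfrac{1}{2}\eta'(\psi_1'+\psi_2'),
\]
and then show separately that each summand lies in $A^Z_0(\mathbb D)$, that is, that $(1-|z|^2)$ times the absolute value vanishes uniformly as $|z| \to 1$. Membership in $A^Z(\mathbb D)$ itself is automatic since $\Lambda: B^Z(\mathbb D) \to A^Z(\mathbb D)$ by the setup of the paper.

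The first summand is handled immediately: the hypothesis $\eta \in B^Z_0(\mathbb D)$ is precisely the condition $(1-|z|^2)|\eta''(z)| \to 0$ uniformly as $|z|\to 1$.

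For the cross term I plan to use that first derivatives of $B^Z$-functions grow at most logarithmically at the boundary. Concretely, for any $\psi \in B^Z(\mathbb D)$, integrating the pointwise bound $|\psi''(w)| \leq \|\psi\|_{B^Z}(1-|w|^2)^{-1}$ along a radius gives
\[
|\psi'(z)| \leq |\psi'(0)| + \tfrac{1}{2}\|\psi\|_{B^Z}\log\tfrac{1+|z|}{1-|z|}.
\]
Applying this to $\eta$, $\psi_1$, and $\psi_2$, I would then obtain
\[
(1-|z|^2)\,|\eta'(z)(\psi_1'+\psi_2')(z)| = O\!\left((1-|z|^2)\bigl(\log\tfrac{1}{1-|z|^2}\bigr)^{\!2}\right),
\]
which tends to $0$ uniformly as $|z|\to 1$ since $(1-|z|^2)(\log\frac{1}{1-|z|^2})^2 \to 0$ and the right-hand side depends only on $|z|$.

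The main obstacle is conceptual rather than technical: neither $\eta'$ nor $\psi_i'$ is bounded on $\mathbb D$, so the naive idea of writing $(1-|z|^2)|\eta'\psi_i'|$ as a product of $(1-|z|^2)|\psi_i'|$ (bounded, using $B^Z \subset B$) and $|\eta'|$ (unbounded) does not work. The rescue is that both first derivatives grow only logarithmically, and the linear decay of the weight $1-|z|^2$ beats the square of a logarithm. It is worth noting that the little-class hypothesis on $\eta$ enters only through the first summand $\eta''$; the cross term automatically lies in $A^Z_0(\mathbb D)$ for any $\eta \in B^Z(\mathbb D)$.
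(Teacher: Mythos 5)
Your proof is correct and follows essentially the same route as the paper: decompose $\Lambda(\psi_1)-\Lambda(\psi_2)$ into $\eta''$ (which lies in $A^Z_0(\mathbb D)$ precisely by the hypothesis $\eta\in B^Z_0(\mathbb D)$) plus a cross term built from products of first derivatives, and kill the cross term by showing that first derivatives of $B^Z$-functions grow sub-polynomially, so that the weight $(1-|z|^2)$ forces uniform decay. The only difference is in how that growth is obtained --- the paper cites Zhu's estimate $(1-|z|^2)^{1/3}|\psi'(z)|\lesssim\Vert\psi\Vert_{B^Z}$ and writes the difference as $\Lambda(\psi_1-\psi_2)-(\psi_1-\psi_2)'\psi_2'$, whereas you derive the sharper logarithmic bound by radial integration of $|\psi''|$ and use the symmetric decomposition; both suffice, and yours is self-contained.
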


\begin{proof}
We show that $\psi_1'\psi_2' \in A^Z_0(\mathbb D)$ for any $\psi_1, \psi_2 \in B^Z(\mathbb D)$.
Indeed, by \cite[Proposition 8]{Zh}, we have
\begin{align*}
(1-|z|^2)|\psi_1'(z)\psi_2'(z)|
&=(1-|z|^2)^{\frac{1}{3}}\cdot(1-|z|^2)^{\frac{1}{3}}|\psi_1'(z)|\cdot(1-|z|^2)^{\frac{1}{3}}|\psi_2'(z)|\\
&\lesssim (1-|z|^2)^{\frac{1}{3}} \Vert \psi_1 \Vert_{B^Z} \Vert \psi_2 \Vert_{B^Z}.
\end{align*}
This, in particular, implies that $\Lambda(B^Z_0(\mathbb D)) \subset A^Z_0(\mathbb D)$ because
\[
\lim_{|z| \to 1}(1-|z|^2)|\Lambda(\psi)| \leq
\lim_{|z| \to 1}(1-|z|^2)|\psi''(z)|+\lim_{|z| \to 1}\frac{1}{2}(1-|z|^2)|\psi'(z)^2|=0
\]
for every $\psi \in B^Z_0(\mathbb D)$.
If $\psi_1-\psi_2 \in B^Z_0(\mathbb D)$, then $\Lambda(\psi_1-\psi_2) \in A^Z_0(\mathbb D)$. Finally, the equality
\[
\Lambda(\psi_1)-\Lambda(\psi_2)=\Lambda(\psi_1-\psi_2) -(\psi_1-\psi_2)'\psi_2'
\]
shows that this belongs to $A^Z_0(\mathbb D)$.
\end{proof}

It follows from this lemma that
the quotient map
\[
\widehat \Lambda:\widetilde{\mathcal T}^Z/B^Z_0(\mathbb D) \to \alpha(T^Z)/A^Z_0(\mathbb D) =
\hat \alpha(T^Z/T_0^Z) \subset A^Z(\mathbb D)/A^Z_0(\mathbb D)
\]
is well-defined.
We now show that this map is biholomorphic.

\begin{theorem}\label{homeo}
The quotient map $\widehat \Lambda$ defined on 
$\widetilde{\mathcal T}^Z/B^Z_0(\mathbb D)$
is a biholomorphic homeomorphism onto $\alpha(T^Z)/A^Z_0(\mathbb D) =\hat \alpha(T^Z/T_0^Z)$. 
\end{theorem}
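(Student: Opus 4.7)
The plan is to reduce the theorem to properties of the quotient Schwarzian derivative map $\widehat{\Phi}$ that are already at our disposal (Theorem \ref{qBers2} and Theorem \ref{injective}) via the factorization $\widehat{\Phi}=\widehat{\Lambda}\circ\widehat{\Psi}$. Well-definedness of $\widehat{\Lambda}$ is granted by Lemma \ref{qlambda}. Holomorphy follows by observing that the differential $d_\psi\Lambda(\eta)=\eta''-\eta'\psi'$ sends $B^Z_0(\mathbb{D})$ into $A^Z_0(\mathbb{D})$: the term $\eta''$ lies in $A^Z_0(\mathbb{D})$ by definition of $B^Z_0(\mathbb{D})$, while $\eta'\psi'$ lies in $A^Z_0(\mathbb{D})$ for any $\eta,\psi\in B^Z(\mathbb{D})$ by the estimate $(1-|z|^2)|\eta'(z)\psi'(z)|\lesssim(1-|z|^2)^{1/3}\Vert\eta\Vert_{B^Z}\Vert\psi\Vert_{B^Z}$ used in the proof of Lemma \ref{qlambda}. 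Hence $d\Lambda$ descends to a bounded complex-linear map of the quotient Banach spaces, so $\widehat{\Lambda}$ is holomorphic. Surjectivity onto $\alpha(T^Z)/A^Z_0(\mathbb{D})$ is immediate from the surjectivity of $\Lambda$.

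To use the factorization I first verify that the quotient pre-Schwarzian map $\widehat{\Psi}:{\rm Bel}^Z(\mathbb{D}^*)/L^Z_0(\mathbb{D}^*)\to\widetilde{\mathcal{T}}^Z/B^Z_0(\mathbb{D})$ is well-defined, i.e., that $\mu_1-\mu_2\in L^Z_0(\mathbb{D}^*)$ implies $\Psi(\mu_1)-\Psi(\mu_2)\in B^Z_0(\mathbb{D})$. Differentiating the identity $\Phi=\Lambda\circ\Psi$ gives $d_\mu\Phi(\nu)=\bigl(d_\mu\Psi(\nu)\bigr)''-\bigl(d_\mu\Psi(\nu)\bigr)'\cdot\Psi(\mu)'$. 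For $\nu\in L^Z_0(\mathbb{D}^*)$ the proof of Lemma \ref{welldef} yields $d_\mu\Phi(\nu)\in A^Z_0(\mathbb{D})$, and the product estimate recalled above yields $\bigl(d_\mu\Psi(\nu)\bigr)'\cdot\Psi(\mu)'\in A^Z_0(\mathbb{D})$, so $\bigl(d_\mu\Psi(\nu)\bigr)''\in A^Z_0(\mathbb{D})$, which is precisely the statement that $d_\mu\Psi(\nu)\in B^Z_0(\mathbb{D})$. Integrating along the segment $\mu_2+t(\mu_1-\mu_2)$ then gives $\Psi(\mu_1)-\Psi(\mu_2)\in B^Z_0(\mathbb{D})$, and the same argument shows that $\widehat{\Psi}$ is holomorphic.

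With the factorization $\widehat{\Phi}=\widehat{\Lambda}\circ\widehat{\Psi}$ now available on the quotients, the rest is formal. By Lemma \ref{equivalence}, the map $\hat{\pi}:{\rm Bel}^Z(\mathbb{D}^*)/L^Z_0(\mathbb{D}^*)\to T^Z/T^Z_0$ is bijective, and $\hat\alpha$ is injective by Theorem \ref{injective}, so $\widehat{\Phi}=\hat\alpha\circ\hat\pi$ is injective. If $\widehat{\Lambda}([\psi_1])=\widehat{\Lambda}([\psi_2])$, the surjectivity of $\Psi$ lets us write $\psi_i=\Psi(\mu_i)$; then $\widehat{\Phi}([\mu_1])=\widehat{\Phi}([\mu_2])$, and hence $[\psi_1]=\widehat{\Psi}([\mu_1])=\widehat{\Psi}([\mu_2])=[\psi_2]$, giving injectivity of $\widehat{\Lambda}$. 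Finally, a local holomorphic right inverse $\hat{s}$ of $\widehat{\Phi}$ supplied by Theorem \ref{qBers2} yields $\widehat{\Psi}\circ\hat{s}$ as a local holomorphic right inverse of $\widehat{\Lambda}$, which by injectivity is an actual local inverse, so $\widehat{\Lambda}^{-1}$ is holomorphic. The main technical obstacle is the well-definedness of $\widehat{\Psi}$: it does not follow directly from Lemma \ref{welldef} alone, but is forced by bootstrapping from Lemma \ref{welldef} and Lemma \ref{qlambda} via the chain rule above; once this is in hand, the remainder is a clean diagram chase from results already proved earlier in the paper.
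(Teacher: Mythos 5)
Your treatment of well-definedness, holomorphy, and surjectivity of $\widehat\Lambda$ is fine and matches the paper, and your bootstrapping argument for the well-definedness and holomorphy of $\widehat\Psi$ (recovering $(d_\mu\Psi(\nu))''\in A^Z_0(\mathbb D)$ from $d_\mu\Phi(\nu)\in A^Z_0(\mathbb D)$ and the product estimate, then integrating along the segment) is a correct and nicely explicit version of what the paper only sketches. The problem is the injectivity step. Your claim that $\hat\pi:{\rm Bel}^Z(\mathbb D^*)/L^Z_0(\mathbb D^*)\to T^Z/T^Z_0$ is bijective is false, and Lemma \ref{equivalence} does not give it: $T^Z/T^Z_0$ is a quotient of $T^Z$, which already identifies Teichm\"uller-equivalent coefficients, whereas ${\rm Bel}^Z(\mathbb D^*)/\!\!\sim_0$ only identifies coefficients differing by $L^Z_0(\mathbb D^*)$. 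Any nonzero Teichm\"uller-trivial $\mu\in{\rm Bel}^Z(\mathbb D^*)$ with $\mu\notin L^Z_0(\mathbb D^*)$ gives $\hat\pi([\mu])=\hat\pi([0])$ with $[\mu]\neq[0]$. Consequently $\widehat\Phi=\hat\alpha\circ\hat\pi$ is \emph{not} injective (it is a submersion with positive-dimensional fibers, consistent with Theorem \ref{qBers2}), and your chain $\widehat\Phi([\mu_1])=\widehat\Phi([\mu_2])\Rightarrow[\mu_1]=[\mu_2]\Rightarrow[\psi_1]=[\psi_2]$ collapses.

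What is missing is exactly the central computation of the paper's proof: the fiber of $\Lambda$ over $\varphi=\Phi(\mu)$ is $\{\log(\Theta_w\circ f_\mu|_{\mathbb D})'\mid w\in f_\mu(\mathbb D^*)\}$, and one must show that each such element differs from $\log f_\mu'$ by $\log\Theta_w'\circ f_\mu\in B^Z_0(\mathbb D)$; this uses the explicit formula for $(\log\Theta_w'\circ f_\mu)''$ together with $\lim_{|z|\to1}(1-|z|^2)|P_{f_\mu}(z)|=0$, which holds because $T^Z\subset T_0$. With that fiber collapse in hand, your route can be repaired: from $\Lambda(\psi_1)-\Lambda(\psi_2)\in A^Z_0(\mathbb D)$ and Theorem \ref{injective} one gets Teichm\"uller representatives $\mu_1',\mu_2'$ with $\mu_1'-\mu_2'\in L^Z_0(\mathbb D^*)$; your well-definedness of $\widehat\Psi$ gives $\Psi(\mu_1')-\Psi(\mu_2')\in B^Z_0(\mathbb D)$; and the fiber argument transfers this back to $\psi_1,\psi_2$, since $\psi_i$ and $\Psi(\mu_i')$ lie in the same fiber of $\Lambda$. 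Without the fiber step the injectivity of $\widehat\Lambda$ is not proved. The remainder of your assembly (the local right inverse $\widehat\Psi\circ\hat s$ and the holomorphy of $\widehat\Lambda^{-1}$) is fine once injectivity is secured.
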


\begin{proof}
For every $\varphi \in \alpha(T^Z)$ with $\varphi=\Phi(\mu)$, an element $\psi \in \Lambda^{-1}(\varphi)$ in the fiber of
the projection $\Lambda:\widetilde{\mathcal T}^Z \to \alpha(T^Z)$ is represented as 
\[
\psi=\log\, (\Theta_w \circ f_{\mu}|_{\mathbb D})'
\]
for the M\"obius transformation $\Theta_w$ with $w \in f_{\mu}(\mathbb D^*)$. This fact was used in the proof of Theorem \ref{holofiber}.
Then, any element in the fiber is obtained by adding $\log \Theta_w' \circ f_{\mu}$ to $\log f_{\mu}'$. Hence, it suffices to show that
$\psi=\log \Theta_w' \circ f_{\mu}$ belongs to $B^Z_0(\mathbb D)$.

As in the computation of \eqref{1st2nd}, we have
\[
\psi''(z)=\frac{2f_\mu'(z)}{(f_\mu(z)-w)^2}-\frac{2f_\mu''(z)}{f_\mu(z)-w} \quad (z \in \mathbb D).
\]
Here, $|f_\mu(z)-w|$ is bounded away from zero for $w \in f_{\mu}(\mathbb D^*)$, and $|f_\mu'(z)|$ is bounded as shown earlier.
Then, using
\[
(1-|z|^2)|f_\mu''(z)|= (1-|z|^2)|f_\mu'(z)||P_{f_\mu}(z)| \lesssim (1-|z|^2) |P_{f_\mu}(z)|,
\]
and the fact $\pi(\mu) \in T^Z \subset T_0$ which implies  
$\lim_{t \to 0} \sup_{1-t<|z|<1}\, (1-|z|^2) |P_{f_\mu}(z)|=0$,
we obtain
\[
\lim_{t \to 0} \sup_{1-t<|z|<1}\, (1-|z|^2)|\psi''(z)|=0.
\]
Thus, $\psi$ belongs to $B^Z_0(\mathbb D)$. This completes the proof of the bijectivity of $\widehat \Lambda$.

For the holomorphy of $\widehat \Lambda$, we consider the derivative $d\Lambda$ of the holomorphic map $\Lambda$. 
At every point $\phi \in \widetilde{\mathcal T}^Z$ in the direction of $\psi \in B_0^Z(\mathbb D)$, a straightforward computation
yields
\[
d_{\phi}\Lambda(\psi)=\psi''-\phi'\psi'.
\]
By the proof of Lemma \ref{qlambda}, we see that $d_{\phi}\Lambda(\psi)$ belongs to $A_0^Z(\mathbb D)$.
Thus, the derivative $d\Lambda$ descends to the quotient spaces, showing that $\widehat \Lambda$ is holomorphic.

By the inverse mapping theorem, if the derivative $d\widehat \Lambda$ is surjective at every point, then $\widehat \Lambda^{-1}$
is holomorphic. However, since $\Lambda$ is a submersion, the derivative $d\Lambda$ is surjective. Then, the quotient 
$d\widehat \Lambda$ of $d\Lambda$ is also surjective.
\end{proof}

We now consider the {\it quotient pre-Schwarzian derivative map}
\[
\widehat \Psi:{\rm Bel}^Z(\mathbb D^*)/\!\!\sim_0 \to B^Z(\mathbb D)/B^Z_0(\mathbb D).
\]
This map is defined analogously to $\widehat \Phi$ using arguments similar to those in Lemma \ref{welldef},
and it satisfies properties parallel to those in Theorem \ref{qBers2}.
However, since we have already established that $\widehat \Lambda$ is biholomorphic, these properties are no longer necessary.
The quotient pre-Schwarzian derivative map is given by
$\widehat \Psi=\widehat \Lambda^{-1} \circ \widehat \Phi$.
Thus, it inherits the same properties as $\widehat \Phi$.

\begin{corollary}
The quotient pre-Schwarzian derivative map $\widehat \Psi$ is a holomorphic split submersion
onto its image $\widetilde{\mathcal T}^Z/B^Z_0(\mathbb D)$.
\end{corollary}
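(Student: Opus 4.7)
The plan is to exploit the factorization $\widehat \Psi = \widehat \Lambda^{-1} \circ \widehat \Phi$ noted just before the corollary, and simply transport the split-submersion structure of $\widehat \Phi$ across the biholomorphism $\widehat \Lambda$. Concretely, Theorem \ref{qBers2} gives that $\widehat \Phi$ is a holomorphic split submersion from ${\rm Bel}^Z(\mathbb D^*)/\!\!\sim_0$ onto $\alpha(T^Z)/A^Z_0(\mathbb D)$, and Theorem \ref{homeo} asserts that $\widehat \Lambda:\widetilde{\mathcal T}^Z/B^Z_0(\mathbb D) \to \alpha(T^Z)/A^Z_0(\mathbb D)$ is a biholomorphic homeomorphism. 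The composition of a holomorphic split submersion with a biholomorphism is again a holomorphic split submersion, so the result should be essentially immediate.

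In more detail, I would first observe that $\widehat \Psi$ is well-defined and holomorphic: holomorphy follows because it is the composition of the holomorphic map $\widehat \Phi$ with the holomorphic inverse $\widehat \Lambda^{-1}$ (whose holomorphy is part of Theorem \ref{homeo}). The image equals $\widehat \Lambda^{-1}(\widehat \Phi({\rm Bel}^Z(\mathbb D^*)/\!\!\sim_0)) = \widehat \Lambda^{-1}(\alpha(T^Z)/A^Z_0(\mathbb D)) = \widetilde{\mathcal T}^Z/B^Z_0(\mathbb D)$, again by Theorem \ref{homeo}.

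For the split-submersion property, I would argue pointwise. Fix $[\mu] \in {\rm Bel}^Z(\mathbb D^*)/\!\!\sim_0$ and let $\varphi = \widehat \Phi([\mu]) \in \alpha(T^Z)/A^Z_0(\mathbb D)$, $\psi = \widehat \Lambda^{-1}(\varphi) \in \widetilde{\mathcal T}^Z/B^Z_0(\mathbb D)$. By Theorem \ref{qBers2}, there is a local holomorphic right inverse $\hat s_\mu$ of $\widehat \Phi$ defined on a neighborhood of $\varphi$ with $\hat s_\mu(\varphi)=[\mu]$. Then the composition $\hat s_\mu \circ \widehat \Lambda$ is holomorphic on a neighborhood of $\psi$ in $\widetilde{\mathcal T}^Z/B^Z_0(\mathbb D)$ (shrinking if necessary so that the image of $\widehat \Lambda$ lies in the domain of $\hat s_\mu$), sends $\psi$ to $[\mu]$, and satisfies
\[
\widehat \Psi \circ (\hat s_\mu \circ \widehat \Lambda) = \widehat \Lambda^{-1} \circ \widehat \Phi \circ \hat s_\mu \circ \widehat \Lambda = \widehat \Lambda^{-1} \circ \widehat \Lambda = {\rm id}
\]
on this neighborhood. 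This is precisely a local holomorphic right inverse of $\widehat \Psi$ at the arbitrary point $\psi$, which establishes the split-submersion property.

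I do not expect a genuine obstacle here; the substantive analytic work has already been absorbed into Theorems \ref{qBers2} and \ref{homeo}. The only minor points to watch are bookkeeping of domains (ensuring the neighborhood on which $\hat s_\mu \circ \widehat \Lambda$ is defined maps into the domain of $\hat s_\mu$) and reminding the reader that the image identification $\widetilde{\mathcal T}^Z/B^Z_0(\mathbb D) = \widehat \Lambda^{-1}(\alpha(T^Z)/A^Z_0(\mathbb D))$ is exactly what Theorem \ref{homeo} provides. No new estimates or constructions should be required.
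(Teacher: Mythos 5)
Your proposal is correct and follows exactly the paper's route: the paper also obtains the corollary from the factorization $\widehat \Psi=\widehat \Lambda^{-1}\circ\widehat \Phi$, letting $\widehat \Psi$ inherit the split-submersion property of $\widehat \Phi$ (Theorem \ref{qBers2}) through the biholomorphism $\widehat \Lambda$ (Theorem \ref{homeo}). Your explicit construction of the local right inverse $\hat s_\mu\circ\widehat \Lambda$ just spells out what the paper leaves implicit.
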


The pre-Schwarzian derivative map $\Psi:{\rm Bel}^Z(\mathbb D^*) \to B^Z(\mathbb D)$ 
does not factor through the Teichm\"uller projection $\pi$
due to the fiber structure of $\widetilde{\mathcal T}^Z$. 
Instead, we consider these maps in the quotient spaces. This leads to the definition of the 
{\it quotient pre-Bers embedding}
\[
\hat \beta: T^Z/T^Z_0 \to B^Z(\mathbb D)/B^Z_0(\mathbb D),
\]
which is obtained by factorizing $\widehat \Psi$ through $\hat \pi$.
This factorization is possible because
\[
\widehat \Lambda^{-1} \circ \hat \alpha \circ \hat \pi
=\widehat \Lambda^{-1} \circ \widehat \Phi=\widehat \Psi,
\]
which implies that 
$\hat \beta=\widehat \Lambda^{-1} \circ \hat \alpha$.

The quotient Teich\-m\"ul\-ler space $T^Z/T^Z_0$ is endowed with two complex structures:  
one induced by the quotient Bers embedding $\hat \alpha$ and the other by the quotient pre-Bers embedding $\hat \beta$.  
Since both embeddings are biholomorphic onto their images under $\widehat \Lambda$, these structures are equivalent.
This completes the proof of Theorem \ref{pre-Bers}. 

\begin{remark}
The quotient pre-Bers embedding exists for other Teichm\"uller spaces, including the universal Teichm\"uller space $T$ and the Teich\-m\"ul\-ler space $T^\gamma$ for $\gamma$-H\"older continuous derivatives. In particular, the asymptotic Teichm\"uller space $AT=T/T_0$ is embedded into $B(\mathbb D)/B_0(\mathbb D)$ via $\hat \beta$. The proof is simpler than the arguments presented above.
\end{remark}


\begin{thebibliography}{99}



\bibitem{BP} J. Becker and Ch. Pommerenke,
``\"Uber die quasikonforme fortsetzung schlichter funktionen,'' {\it Math. Z.} {\bf 161}: 69--80 (1978) 

\bibitem{Ch} S.B. Chae, Holomorphy and Calculus in Normed Spaces, 
Pure and Applied Math. 92, Marcel Dekker, 1985


\bibitem{GS} F. P. Gardiner and D. Sullivan, ``Symmetric structure on a closed curve,'' {\it Amer. J. Math.} {\bf 114}: 683--736 (1992)


\bibitem{Le} O. Lehto, 
Univalent Functions and Teich\-m\"ul\-ler Spaces, 
Grad. Texts in Math. 109, Springer, 1987

\bibitem{M0}
K. Matsuzaki, ``Injectivity of the quotient Bers embedding of Teich\-m\"ul\-ler spaces'', 
{\it Ann. Acad. Sci. Fenn. Math.} {\bf 44}: 657--679  (2019)

\bibitem{M1}
K. Matsuzaki, ``Teich\-m\"ul\-ler space of circle diffeomorphisms with H\"older continuous derivatives,'' {\it Rev. Mat. Iberoam.} {\bf 36}: 1333--1374  (2020)

\bibitem{M2}
K. Matsuzaki, ``Bi-Lipschitz quasiconformal extensions,'' 
Complex Geometric Analysis, {\it Springer Proceedings in Mathematics \& Statistics} {\bf 481}, 2025
(arXiv:2302.08951)

\bibitem{M3}
K. Matsuzaki, 
``The complex structure of the Teichm\"uller space of circle diffeomorphisms in the Zygmund smooth class,''
{\it J. Math. Anal. Appl.} {\bf 540}: 128593 (2024)

\bibitem{Na} S. Nag, The Complex Analytic Theory of Teich\-m\"ul\-ler Spaces, Wiley--Interscience, New York, 1988

\bibitem{Pom} Ch. Pommerenke, Boundary Behaviour of Conformal Maps, Springer, 1992

\bibitem{TT} L. Takhtajan and L. P. Teo, ``Weil-Petersson metric on the universal Teich\-m\"ul\-ler space,'' {\it Mem. Amer. Math. Soc.} 
{\bf 183}: 861 (2006)

\bibitem{TW1} S. Tang and P. Wu,
``Teich\-m\"ul\-ler space of circle diffeomorphisms with Zygmund smooth,''
{\it J. Math. Anal. Appl.} {\bf 498}: 124975  (2021)

\bibitem{TW2} S. Tang and P. Wu,
``On Teich\-m\"ul\-ler space of circle diffeomorphisms with H\"older
continuous derivative,'' {\it Anal. Math. Phys.} {\bf 11}: 64  (2021)

\bibitem{WM-2} H. Wei and K. Matsuzaki, ``The VMO-Teich\-m\"ul\-ler space and the variant of Beurling--Ahlfors extension by heat kernel,'' 
{\it Math. Z.} {\bf 302}: 1739--1760 (2022) 



\bibitem{Zh} K. Zhu, ``Bloch type spaces of analytic functions,'' {\it Rocky Mt. J. Math.} {\bf 23}: 1143--1177 (1993) 

\bibitem{Zy} A. Zygmund, ``Smooth functions,'' {\it Duke Math. J.} {\bf 12}: 47--76  (1945)

\end{thebibliography}
\end{document}